\newtheorem{theorem}{Theorem}[section]
\newtheorem{corollary}[theorem]{Corollary}
\newtheorem{lemma}[theorem]{Lemma}
\newtheorem{proposition}[theorem]{Proposition}
\newcommand{\be}{\begin{equation}}
\newcommand{\ee}{\end{equation}}
\newcommand{\ol}{\overline}
\newcommand{\lt}{\left}
\newcommand{\rt}{\right}
\newcommand{\goto}{\rightarrow}
\newcommand{\R}{\mathbb{R}}
\newcommand{\e}{\varepsilon}
\newcommand{\RNum}[1]{\uppercase\expandafter{\romannumeral #1\relax}}
\theoremstyle{definition}
\newtheorem{remark}[theorem]{Remark}
\numberwithin{equation}{section}
\begin{document}
\setlength{\baselineskip}{1.2\baselineskip}

\title[Complete translating solutions]
{Complete translating solitons to the mean curvature flow in $\R^3$ with nonnegative mean curvature}

\author{Joel Spruck}
\address{Department of Mathematics, Johns Hopkins University,
 Baltimore, MD 21218}
\email{js@math.jhu.edu}
\author{Ling Xiao}
\address{Department of Mathematics, University of Connecticut,
Storrs, Connecticut 06269}
\email{ling.2.xiao@uconn.edu}

\begin{abstract} We prove that any complete immersed two-sided mean convex  translating soliton $\Sigma \subset \R^3$ for the mean curvature flow is convex. As a corollary it follows that an entire mean convex graphical translating soliton  in $\R^3$ is the axisymmetric  ``bowl soliton''. We also show that if the mean curvature of $\Sigma$ tends to zero at infinity, then  $\Sigma$ can be represented as an entire graph and so is  the ``bowl soliton''. Finally we classify the asymptotic behavior of all locally strictly convex graphical translating solitons defined over strip regions.
\end{abstract}

\maketitle

\section{Introduction}
\label{sec0}

A complete immersed  hypersurface $f:\Sigma^n \goto \R^{n+1}$ with trivial normal bundle (two-sided for short) is called a translating soliton for the mean curvature flow, with respect to a unit direction $e_{n+1}$, if its
mean curvature  is given by $H=<N,e_{n+1}>$ where $N$  is a global unit normal field for $\Sigma$. Then $F(x,t):=f(x)+te_{n+1}$ satisfies
\[\Delta^{\Sigma}F=HN=<N,e_{n+1}>N=({e_{n+1}})^{\perp}=F_t^{\perp}~,\]
thus justifying the terminology.
Translating solitons  form a special class of eternal  solutions for the mean curvature flow that besides having their own intrinsic interest,  are models of slow singularity formation. Therefore there has been a great deal of effort in trying to classify them in the case $H>0$. In this paper, {\em we shall always assume our translating solitons are mean convex which by abuse of language we take to mean $H>0$}. The abundance of glueing constructions for translating solitons with high genus and $H$ changing sign (see \cite{N1},\cite{N2},\cite{N3}, \cite{Hal13}, \cite{DDN}, \cite{S})
suggests a general classification is unlikely.

For $n=1$ the unique solution is the grim reaper curve
$\Gamma:\,\,x_2= \log{\sec{x_1}}, \,\,\,|x_1| <\frac{\pi}2~,$
while for $n\geq 2\,$ we have the one parameter family of convex grim cylinders (see Lemma \ref{grim} for the $n=2$ case)
\be \label{eq0.10}
x_{n+1}=\lambda^2\log{\sec{\frac{ x_1}{\lambda}}}+ \sum_{k=2}^n \alpha_k x_k,\, \,\,\sum_{k=2}^n \alpha_k^2=\lambda^2-1,\,\,\,|x_1|<\frac{\pi}2 \lambda,\, \,\,\lambda \geq 1~,
\ee
which can be obtained from the standard grim cylinders $\Gamma \times \R^{n-1}$ by a rotation and scaling. The family of  grim graphical solitons in $\R^3$:
\be \label{0.grim}
u^{\lambda}(x_1,x_2)= \lambda^2\log{\sec{\frac{ x_1}{\lambda}}}\pm Lx_2,\, \, L=\sqrt{\lambda^2-1},\, \lambda \geq 1
\ee
defined over the strip $\mathcal{S}^{\lambda}:=\{(x_1,x_2): |x_1|<R:=\lambda \frac{\pi}2\}$ will play a central role in our classification of mean convex graphical translating solitons in $\R^3$.

Similarly if
$x_{m+1}=v(x_1,\ldots, x_m)$ is a graphical translating soliton in $\R^{m+1}$, then for $x=(x_1,\ldots,x_n)=(x',x_{m+1},\ldots,x_n)$,
\be \label{eq0.15}
v^{\lambda}(x):=\lambda^2v(\frac{ x'}{\lambda})+ \sum_{k=m+1}^n \alpha_k x_k,\, \,\,\sum_{k=m+1}^n \alpha_k^2=\lambda^2-1,\,\,\,\lambda \geq 1~,
\ee
is a graphical translating soliton in $\R^{n+1}=\R^m \times \R^{n-m}\times\R $. Conversely if $x_{n+1}=u(x_1,\ldots, x_n)$ is a {\em convex} graphical translating soliton in $\R^{n+1}$,  then $D^2 u$ has constant  rank $m,\,\,1\leq m<n \,$  by Corollary 1.3 of Bian and Guan \cite{BG09}  which implies $u(x)=v^{\lambda}(x)$ for an appropriate choice of  $x_1, \ldots, x_n$ and $\alpha_{m+1},\dots, \alpha_n$. Moreover  by the results of Wang \cite{Wang11},
$\Sigma=\text{graph}(v)$ is a complete graph in $\R^{m+1}$ defined over a strip in $\R^m$ or is an entire graph over $\R^m$.

There is as well a unique (up to horizontal translation) axisymmetric solution called the ``bowl soliton'' \cite{AW94}, \cite{CSS} which has the  asymptotic expansion as an entire graph $u(x)=\frac1{2(n-1})|x|^2 -\log{|x|}+O(1)$.

White \cite{Wh03} tentatively conjectured that all convex translating solutions have the form
\[\{(x,y,z)\in \R^j\times\R^{n-j}\times \R: z=f(|x|)\}\]
for $j\geq 2$; for $j=1,\, f$ defines the grim reaper curves so is defined on an interval. White remarks that even if this conjecture
is false, {\em it may be true for blow up limits of mean convex mean curvature flows}.
In \cite{Wang11} Wang proved that in dimension $n = 2$, any entire convex  graphical translating soliton must be rotationally symmetric, and hence the bowl soliton. He also showed there exist entire locally strictly convex graphical translating solitons for dimensions $n>2$ that are not rotationally symmetric (thus disproving one conjecture of White \cite{Wh03}) as well as complete locally strictly convex graphical translating solitons defined over strip regions in $\R^n$. Wang also conjectured that for $n=2$, any entire graphical translating soliton must be locally strictly convex and a similar statement in dimension $n>2$ under the additional assumption that $H$ tends to zero at infinity.

More recently, Haslhofer \cite{Hasl15} proved the uniqueness of the bowl soliton in arbitrary dimension under the assumption that the translating soliton $\Sigma$ is $\alpha$-noncollapsed and uniformly 2-convex.  The $\alpha$-noncollapsed condition means that for each $P\in \Sigma$, there are closed  balls $B^{\pm}$ disjoint from $\Sigma-{P}$ of radius at least $\frac{\alpha}{H(P)}$  with $ B^{+}\cap B^{-}=\{P\}$.  It figures prominently in the  regularity theory for mean convex mean curvature flow \cite{Wh00}, \cite{Wh03}, \cite{HS99}, \cite{SW09}.
The 2-convex condition (automatic if $n=2$) means that if $\kappa_n \leq \kappa_{n-1} \leq \ldots \kappa_1$ are the ordered principal curvatures of $\Sigma$, then $\kappa_n+\kappa_{n-1} \geq \beta H$ for some uniform $\beta>0$. The $\alpha$-noncollapsed condition is a deep and powerful property of weak solutions of the mean convex mean curvature flow \cite{Wh03},\cite{HK} which implies that any complete ({\em $\alpha$-noncollapsed}) mean convex translating soliton $\Sigma$ is convex with uniformly bounded second fundamental form.

The main result of this paper is a proof of a more general form of the $n=2$ conjecture of Wang.
\begin{theorem}
\label{conth.2}
Let $\Sigma\subset \R^3$ be a complete immersed two-sided translating soliton for the mean curvature flow with nonnegative mean curvature. Then $\Sigma$ is convex.
\end{theorem}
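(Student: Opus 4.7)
The plan is to prove convexity (i.e.\ $K\geq 0$) through a strong maximum principle argument applied to a scale-invariant curvature quantity, using the drift operator naturally associated to translating solitons.

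\medskip
\textbf{Reduction and choice of quantity.} Differentiating the soliton equation $H=\langle N,e_3\rangle$ gives the structural identity $\nabla_i H=-h_{ij}(e_3^{T})^j$, and, for the drift-Laplacian $\mathcal{L}:=\Delta_\Sigma+\langle e_3^{T},\nabla\cdot\rangle$, the evolution equation $\mathcal{L}H=-|A|^2 H\leq 0$. Since $H\geq 0$, the strong maximum principle forces either $H>0$ everywhere on $\Sigma$ or $H\equiv 0$; in the latter case $e_3$ is everywhere tangent to $\Sigma$, so $\Sigma$ is a vertical cylinder over a planar curve of zero geodesic curvature, i.e.\ a plane, which is trivially convex. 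Assume from now on that $H>0$ on $\Sigma$. With $\kappa_1\geq\kappa_2$ the ordered principal curvatures, convexity is equivalent to $K=\kappa_1\kappa_2\geq 0$, and I would measure its failure by
\[ w\;:=\;1-\frac{4K}{H^2}\;=\;\Bigl(\frac{\kappa_1-\kappa_2}{H}\Bigr)^2\;\in\;[0,\infty), \]
a smooth function on $\Sigma$ for which convexity is equivalent to $w\leq 1$.

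\medskip
\textbf{Evolution inequality and maximum principle.} Using the Simons-type identity for translating solitons, which in a local principal frame takes the form $\mathcal{L}h_{ij}=-|A|^2 h_{ij}$, together with the structural identity $\nabla H=-A(e_3^{T})$ from Step~1, I would compute $\mathcal{L}w$ and aim for a drift-elliptic differential inequality of the form $\mathcal{L}w\geq B(\nabla w,\nabla H,H)$ on the open set $\{w>1\}$, with $B$ a gradient-quadratic term of favorable sign. Since $\Sigma$ is complete but has no a priori curvature bound, the Omori--Yau maximum principle is not directly available; instead I would localize with a smooth cutoff $\eta_R$ supported in the intrinsic ball $B_R(p)\subset\Sigma$ satisfying $|\nabla\eta_R|+|\mathcal{L}\eta_R|=O(1/R)$, apply the maximum principle to $\eta_R\cdot(w-1)_{+}^{2}$, and send $R\to\infty$ to conclude $w\leq 1$ throughout, i.e.\ $K\geq 0$.

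\medskip
\textbf{Main obstacle.} The hardest step is engineering the correct sign of the reaction terms in $\mathcal{L}w$ on $\{w>1\}$ without any a priori bound on $|A|$. Unlike in the $\alpha$-noncollapsed setting of Haslhofer, there is no uniform curvature bound, so the $|A|^2$-terms produced by the Simons identity cannot be absorbed by brute $L^{\infty}$ estimates; the argument must instead exploit the particular algebraic structure of the translating soliton, via $\nabla H=-A(e_3^{T})$ and the Codazzi equations, to produce cancellations that leave only favorable gradient terms on the bad set. A secondary technical issue is the non-smoothness of $\kappa_2$ at umbilic points, which is circumvented by working with the smooth quantity $H^2-4K$ rather than with $\kappa_2$ directly. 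This cancellation, rather than Step~1, is where the argument genuinely uses that $\Sigma$ is a translating soliton and not merely an abstract mean convex surface, and it is where I expect most of the technical weight to lie.
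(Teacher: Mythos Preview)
Your overall strategy---apply a strong maximum principle to a scale-invariant curvature ratio using the drift Laplacian $\mathcal{L}=\Delta_\Sigma+\langle e_3^T,\nabla\cdot\rangle$---matches the paper's. But there is a genuine gap in your plan, and it is exactly at the point where you try to bypass the Omori--Yau maximum principle.

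You correctly note that Omori--Yau needs a Ricci lower bound, and you propose instead to localize with cutoffs $\eta_R$ satisfying $|\nabla\eta_R|+|\mathcal{L}\eta_R|=O(1/R)$. The trouble is that constructing such cutoffs requires the \emph{same} geometric control: for $\eta_R=\psi(r/R)$ built from intrinsic distance $r$, the term $\mathcal{L}\eta_R$ contains $\psi'(r/R)\,\Delta_\Sigma r/R$, and bounding $\Delta_\Sigma r$ requires a lower bound on the Gauss curvature $K$. Since $K\geq 0$ is precisely what you are trying to prove, this is circular. Moreover, without an a priori bound on $|A|$ the quantity $w=2|A|^2/H^2-1$ need not even be bounded, so $\eta_R(w-1)_+^2$ need not attain a maximum.

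The paper resolves this by first proving, as an independent step, a \emph{universal} curvature bound $|A|^2\leq C$ on any complete two-sided mean convex translator in $\R^3$. The mechanism is: from $\mathcal{L}H+|A|^2H=0$ with $H>0$ one deduces that $\Sigma$ is strictly stable for the weighted area functional $\int_\Sigma e^{x_3}\,dv$; stability in dimension two then yields intrinsic area bounds and a Choi--Schoen/Colding--Minicozzi type pointwise curvature estimate. Once $|A|\leq C$ is in hand, Ricci is bounded below, Omori--Yau applies, and---crucially---translates $\Sigma-P_n$ are precompact in $C^\infty_{\mathrm{loc}}$. The paper then runs the maximum principle on $\phi(\kappa_2/\kappa_1)$ with $\phi(r)=r^4e^{-1/r^2}\cdot\mathbf{1}_{\{r<0\}}$; if the supremum is not attained, an Omori--Yau sequence $P_n$ is produced, translated to the origin, and passed to a smooth limit translator on which the supremum \emph{is} attained, reducing to the interior case. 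Your proposal has no substitute for either the curvature bound or this compactness step.

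A secondary point: the paper's choice of test function $\phi(\kappa_2/\kappa_1)$ is engineered so that on $\{\kappa_1>0>\kappa_2\}$ the second-order terms in $\mathcal{L}(G/F)$ are manifestly nonnegative (convexity of $\phi$ handles the diagonal Hessian terms, and the sign of $\dot\phi\cdot\frac{\kappa_1+\kappa_2}{\kappa_2-\kappa_1}$ handles the off-diagonal Codazzi terms). Your $w$ is affinely equivalent to $|A|^2/H^2$, for which the analogous computation leaves the term $|\nabla A|^2-\tfrac{|A|^2}{H^2}|\nabla H|^2$; on the bad set $|A|^2/H^2>1$ this is not obviously nonnegative, so even the differential inequality you are aiming for is not yet secured.
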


By Sacksteder's theorem \cite{S}, the condition $H>0$  and Corollary 2.1 of \cite{Wang11}, we may conclude
 \begin{corollary}
 \label{cor1}
  $\Sigma$  is the boundary of a convex region in $\R^3$ whose projection on the plane spanned by $e_1,\,e_2$ is (after rotation of coordinates)  either a strip region $\{(x_1,x_2):|x_1|< R\}$ or $\R^2$. Moreover $\Sigma$ is the graph of a function $u(x_1, x_2)$ which satisfies the equation
 \be \label{0.25}
 \text{div}\lt(\frac{\nabla u}{\sqrt{1+|\nabla u|^2}}\rt)=\frac{1}{\sqrt{1+|\nabla u|^2}}~,
 \ee
 or in nondivergence form,
 \be \label{0.26}
 (1+u_{x_2}^2)u_{x_1 x_1}-2u_{x_1}u_{x_2}u_{x_1 x_2}+(1+u_{x_1}^2)u_{x_2 x_2}=1+u_{x_1}^2+u_{x_2}^2~.
 \ee

 \end{corollary}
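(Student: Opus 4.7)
The plan is to deduce the corollary from Theorem~\ref{conth.2} in three steps: first, promote abstract convexity of $\Sigma$ to embeddedness as the boundary of a convex body; second, show this convex surface is a vertical graph over a very restricted projected domain; third, translate the soliton identity $H=\langle N,e_3\rangle$ into the stated PDE.

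By Theorem~\ref{conth.2} the principal curvatures of $\Sigma$ satisfy $\kappa_1,\kappa_2\geq 0$, and the hypothesis $H>0$ guarantees that at each point at least one is strictly positive. If the Gauss curvature $K=\kappa_1\kappa_2$ is positive somewhere, Sacksteder's theorem \cite{S} upgrades $f\colon\Sigma\to\R^3$ to an embedding whose image bounds a closed convex body $\Omega\subset\R^3$. Otherwise $K\equiv 0$, so $\Sigma$ is a complete flat convex surface and hence a cylinder over a plane curve $\gamma$. Since $\Sigma$ translates in direction $e_3$, the curve $\gamma$ is itself (after rescaling) a translating soliton curve in the plane perpendicular to the cylinder axis, which is the grim reaper, and $\Sigma$ is embedded as a grim cylinder. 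In either case $\Sigma=\partial\Omega$ bounds a convex region.

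For the second step, the soliton equation combined with $H>0$ gives $\langle N,e_3\rangle>0$ pointwise, so the outward unit normal of $\Omega$ never becomes horizontal. Convexity of $\partial\Omega$ then forces every vertical line to meet $\Sigma$ in at most one point, and $\Sigma$ is the graph of a function $u$ defined on the interior of the convex planar set $\pi(\Omega)$. Corollary~2.1 of Wang \cite{Wang11} classifies this projected domain for a complete graphical translating soliton with $H>0$: after a rotation of the $x_1x_2$-plane it is either all of $\R^2$ or a strip $\{|x_1|<R\}$, with half-planes, wedges, and bounded sets ruled out by completeness of $\Sigma$ together with the strict positivity of $H$.

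Finally, with $N=(-\nabla u,1)/\sqrt{1+|\nabla u|^2}$ the upward unit normal to the graph, the classical identities $H=\dive\bigl(\nabla u/\sqrt{1+|\nabla u|^2}\bigr)$ and $\langle N,e_3\rangle=1/\sqrt{1+|\nabla u|^2}$ combine directly into \eqref{0.25}; multiplying by $(1+|\nabla u|^2)^{3/2}$ and expanding the divergence then yields the quasilinear form \eqref{0.26}. The subtlety of the argument is concentrated in Steps~1 and 2, specifically in carefully disposing of the degenerate $K\equiv 0$ case and in invoking Corollary~2.1 of \cite{Wang11} to narrow the admissible projected domains to the two stated possibilities; once the graph representation is in hand, the PDE derivation is a routine computation.
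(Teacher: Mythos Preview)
Your proof is correct and follows essentially the same approach as the paper, which simply states that the corollary follows ``by Sacksteder's theorem, the condition $H>0$, and Corollary~2.1 of \cite{Wang11}'' without further elaboration; you have fleshed out precisely these three ingredients. One small remark: the normal $N$ with $\langle N,e_3\rangle>0$ is the \emph{inward} normal of the convex region $\Omega$ (the epigraph of $u$), not the outward one, but this does not affect your graphicality argument.
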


Combining Theorem \ref{conth.2} with Theorem 1.1 in \cite{Wang11} we have
\begin{corollary}
\label{cor2}
Any entire solution in $\R^2$ to the equation
\[\text{div}\lt(\frac{\nabla u}{\sqrt{1+|\nabla u|^2}}\rt)=\frac{1}{\sqrt{1+|\nabla u|^2}}\]
must be rotationally symmetric in an appropriate coordinate system and hence is the bowl soliton.
 \end{corollary}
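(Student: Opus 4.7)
My plan is to observe that Corollary \ref{cor2} is essentially just a direct combination of Theorem \ref{conth.2} with Wang's theorem, so the work consists of verifying that the hypotheses line up.

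First, I would translate the PDE (\ref{0.25}) back into geometric terms. Given any entire $C^2$ solution $u:\R^2\to\R$, consider the graph $\Sigma=\{(x_1,x_2,u(x_1,x_2))\}\subset\R^3$ with upward unit normal
\[
N=\frac{(-u_{x_1},-u_{x_2},1)}{\sqrt{1+|\nabla u|^2}}.
\]
Then $\langle N,e_3\rangle=1/\sqrt{1+|\nabla u|^2}$, while the mean curvature of the graph (with this normal) is exactly $H=\mathrm{div}(\nabla u/\sqrt{1+|\nabla u|^2})$. Equation (\ref{0.25}) is therefore the translating soliton equation $H=\langle N,e_3\rangle$, and moreover
\[
H=\frac{1}{\sqrt{1+|\nabla u|^2}}>0 \quad\text{on all of }\R^2.
\]
Since $u$ is defined on all of $\R^2$, the graph $\Sigma$ is a complete, embedded (hence two-sided) translating soliton in $\R^3$ with strictly positive mean curvature.

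Next, I would invoke Theorem \ref{conth.2} directly on $\Sigma$ to conclude that $\Sigma$ is convex, i.e.\ $u$ is a convex function on $\R^2$. At this point the corollary reduces to a known classification: Theorem~1.1 of Wang \cite{Wang11} asserts that any entire convex graphical translating soliton in $\R^3$ must be rotationally symmetric, and therefore must coincide, up to horizontal translation, with the axisymmetric bowl soliton described in the introduction.

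There is essentially no obstacle here beyond bookkeeping, because the substantive work has already been done: Theorem \ref{conth.2} supplies convexity from mean convexity alone (removing the \emph{a priori} convexity hypothesis in Wang's result), and Wang's theorem supplies rotational symmetry from convexity. The only subtlety worth flagging is confirming that the normal choice used to define ``mean convex'' in Theorem \ref{conth.2} agrees with the one for which $H=\langle N,e_3\rangle$ on a graph; this is immediate from the computation above since $\langle N,e_3\rangle>0$ forces the upward normal to be the correct one.
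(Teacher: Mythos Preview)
Your proposal is correct and follows exactly the paper's approach: the paper simply states that Corollary~\ref{cor2} follows by combining Theorem~\ref{conth.2} with Theorem~1.1 of Wang~\cite{Wang11}, and your write-up just makes the hypothesis-checking explicit.
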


A necessary and sufficient condition for mean convex  translating solitons to be graphical over $\R^2$ and thus the bowl soliton
is given in the next theorem.

\begin{theorem}
\label{th2}
Let $\Sigma\subset \R^3$ be a complete immersed two-sided translating soliton for the mean curvature flow with nonnegative mean curvature and suppose that $H(P)\goto 0$ as $P\in \Sigma$ tends to infinity. Then $\Sigma$ is after translation the axisymmetric bowl soliton.
\end{theorem}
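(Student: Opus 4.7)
The plan is to reduce the theorem to Corollary~\ref{cor2} by showing that under the hypothesis $H(P)\to 0$ at infinity, $\Sigma$ must be an entire graph over $\R^2$. By Theorem~\ref{conth.2} and Corollary~\ref{cor1}, $\Sigma$ is (after rotation of coordinates) the graph of a convex function $u$ defined on either $\R^2$ or a strip $\mathcal{S}=\{|x_1|<R\}$ with $R<\infty$. In the former case Corollary~\ref{cor2} identifies $\Sigma$ with the bowl soliton immediately, so the entire burden of the proof is to eliminate the strip case.

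Suppose for contradiction that $\Sigma=\text{graph}(u)$ over a strip $\mathcal{S}$. Consider the sequence $P_k := (0,k,u(0,k))\in\Sigma$ for $k\to\infty$ and the translated surfaces $\Sigma_k := \Sigma - P_k$, which are again convex translating solitons over $\mathcal{S}$ with height functions $u_k(x_1,x_2) = u(x_1,x_2+k) - u(0,k)$ satisfying $u_k(0,0)=0$. Convexity gives $|A|^2 \le (\kappa_1+\kappa_2)^2 = H^2 \le 1$ uniformly, so the $\Sigma_k$ admit a smooth subsequential geometric limit $\Sigma_\infty$, a complete convex translating soliton contained in the slab $\{|x_1|\le R\}\subset\R^3$. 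The monotonicity of $u_{x_2}(0,\cdot)$ in $x_2$ (from $u_{x_2x_2}\ge 0$) and its convergence as $x_2\to\infty$ force $\Sigma_\infty$ to be invariant under translation in the $x_2$ direction. Combined with the asymptotic classification of convex graphical translating solitons over strip regions (the final main result of the paper), $\Sigma_\infty$ is necessarily of the grim cylinder form
\[
\overline{u}(x_1,x_2) = \lambda^2\log\sec(x_1/\lambda) + L x_2,\quad \lambda = 2R/\pi,\ L\ge 0.
\]
On $\overline{u}$ the mean curvature equals $\cos(x_1/\lambda)/\lambda$, so $H\equiv 1/\lambda>0$ along the axis $x_1=0$. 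By smooth convergence $H(P_k)\to 1/\lambda > 0$, contradicting $H(P_k)\to 0$.

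The main obstacle is to carry out the limit extraction and its identification as a grim cylinder, which rests on the strip-case asymptotic classification; this is a substantial result in its own right, developed separately in the paper. A secondary technical issue is that the assumption $H(P_k)\to 0$ forces $|\nabla u_k(0,0)| = |\nabla u(0,k)|\to\infty$, so the graphs $u_k$ become vertical at the origin and one cannot pass to the limit in graphical form in the original $(x_1,x_2)$ coordinates; the limit must instead be taken as a sequence of complete surfaces with uniformly bounded curvature (Arzel\`a--Ascoli together with elliptic regularity), and the grim-cylinder structure extracted a posteriori via the above monotonicity considerations.
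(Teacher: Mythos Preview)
Your argument has two genuine gaps.

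\textbf{Circularity.} You appeal to ``the asymptotic classification of convex graphical translating solitons over strip regions (the final main result of the paper)'', i.e.\ Theorem~\ref{thm.strip}. But the proof of Theorem~\ref{thm.strip} begins with Lemma~\ref{pr2.cor}, whose very first sentence is ``By Theorem~\ref{th2} there is a sequence of points\ldots''. So Theorem~\ref{thm.strip} logically depends on Theorem~\ref{th2}, and you cannot invoke it here.

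\textbf{The limit is not a grim cylinder.} Even setting circularity aside, your blow-down does not produce a grim cylinder. You yourself observe that $H(P_k)\to 0$ forces $|\nabla u(0,k)|\to\infty$, so the tangent planes become vertical. Under smooth convergence of the translated surfaces $\Sigma_k\to\Sigma_\infty$, the mean curvature converges pointwise, hence $H_{\Sigma_\infty}(0)=\lim_k H(P_k)=0$. Since $\Sigma_\infty$ is a translating soliton with $H\ge 0$ and $\Delta^f H+|A|^2H=0$, the strong maximum principle forces $H\equiv 0$ on $\Sigma_\infty$; thus $\Sigma_\infty$ is a vertical plane, not a grim cylinder, and your contradiction $H(P_k)\to 1/\lambda>0$ never materializes. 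The ``monotonicity considerations'' you allude to cannot repair this: the hypothesis $H\to 0$ at infinity is precisely what makes the limit degenerate.

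\textbf{What the paper does instead.} The paper's proof is a direct barrier argument with no compactness. Assuming $\Sigma$ is a graph over a strip of half-width $R$, one first uses convexity and the hypothesis $H\to 0$ to show that $u(x_1,x_2)$ grows superlinearly as $x_2\to+\infty$. One then slides the family of tilted half-cylinders
\[
v^t(x_1,x_2)=-\sqrt{1+t^2}\,\sqrt{R^2-x_1^2}+t(x_2-A),\qquad |x_1|\le R,\ x_2\ge A,
\]
each of constant mean curvature $1/R$, underneath $\Sigma$. Superlinear growth guarantees $u>v^t$ at spatial infinity for every $t$, while for $t$ large $v^t$ must cross $u$; so increasing $t$ produces a first interior contact point $P$ with $H(P)\ge 1/R$. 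Since $A$ is arbitrary, this contradicts $H\to 0$ at infinity.
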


The existence of the grim family $u^{\lambda}$ (see \ref{0.grim} ), the convexity Theorem  \ref{conth.2} and a global curvature bound (see Theorem {\ref{curvature} in section \ref{curv})
are the key tools that allows us to classify locally strictly convex graphical translating  solitons defined over strips.

\begin{theorem}\label{thm.strip} Let $\Sigma=\text{graph}(u)$  be a complete locally strictly convex translating soliton defined over a strip region $\mathcal{S}^{\lambda}:=\{(x_1,x_2): |x_1|<R:=\lambda \frac{\pi}2\}$. Then (after possibly relabeling the $e_2$ direction)\\
i. For  $\lambda\leq 1$ there is no locally strictly convex solution in $\mathcal{S}^{\lambda}$.\\
ii. $\lim_{x_2\goto +\infty} u_{x_2}(x_1,x_2)=L:=\sqrt{\lambda^2-1},\,\lambda>1$. \\
iii. $\lim_{x_2\goto -\infty} u_{x_2}(x_1,x_2)=-L$.\\
iv.
\be \label{eq.65}
\begin{aligned}
 \,&\lim_{A\goto \pm \infty}(u(x_1, x_2+A)-u(0, A))=u^{\lambda}(x_1,x_2)\\
 \,&\lim_{A\goto \pm \infty}u_{x_1}(x_1, A)=\lambda \tan{\frac{x_1}{\lambda}}~.
 \end{aligned}
 \ee
 The above limits are uniform in $|x_1| \leq R-\e \, $ for any $\e>0$.\\
 v. \,\,For $P=(x_1,x_2, u(x_1, x_2))\in \Sigma,\,\, (x_1, x_2)\in \mathcal{S}^{\lambda}$,
 \be \label{eq.66}
 H(P)\leq R-|x_1| ,\,\, H(P) \geq \theta(\e)\,\, \text{if}\,\,  |x_1| \leq R-\e~.
 \ee
 vi. $u(x_1, x_2)=u(-x_1, x_2)$  and $u_{x_1}(x_1,x_2)>0$ for $x_1>0$.\\
 \end{theorem}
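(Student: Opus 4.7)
\emph{Overall plan.} I would derive (ii)--(iv) from an asymptotic translation analysis in the $x_2$ direction, then deduce (i) as a corollary and handle (v), (vi) by separate arguments. By Theorem~\ref{conth.2}, $u$ is convex; local strict convexity yields $u_{x_2x_2}>0$, so $u_{x_2}(x_1,\cdot)$ is strictly increasing and has limits $\ell_\pm(x_1)\in[-\infty,+\infty]$ as $x_2\to\pm\infty$. Consider the translates
\[v_A(x_1,x_2):=u(x_1,x_2+A)-u(0,A),\]
each a convex translating soliton on $\mathcal{S}^\lambda$ with $v_A(0,0)=0$. The global curvature bound from Theorem~\ref{curvature} combined with convexity yields uniform $C^{2,\alpha}_{\text{loc}}$ bounds on $\mathcal{S}^\lambda$; so, along a subsequence $A_j\to+\infty$, $v_{A_j}$ converges in $C^{2,\alpha}_{\text{loc}}$ to a convex translating soliton $v_\infty$ that again satisfies \eqref{0.26}.

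\emph{Identifying the limit.} The pointwise monotone convergence $v_{A,x_2}(x_1,x_2)=u_{x_2}(x_1,x_2+A)\to\ell_+(x_1)$ forces $v_{\infty,x_2}$ to be independent of $x_2$, so $v_{\infty,x_2x_2}=0$. Convexity of $v_\infty$ then forces $v_{\infty,x_1x_2}=0$, and hence $v_\infty(x_1,x_2)=Lx_2+g(x_1)$ for a constant $L$. Substituting into \eqref{0.26} yields the ODE $(1+L^2)g''=1+(g')^2+L^2$, whose solutions are
\[g(x_1)=(1+L^2)\log\sec\bigl((x_1-x_0)/\sqrt{1+L^2}\bigr)+\text{const}\]
with natural domain $|x_1-x_0|<\sqrt{1+L^2}\,\pi/2$. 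Since $v_A$ diverges on $\partial\mathcal{S}^\lambda$ (inherited from the boundary blow-up of $u$, preserved in the limit via convexity), the natural domain of $g$ must coincide with $\mathcal{S}^\lambda$, forcing $x_0=0$ and $1+L^2=\lambda^2$. Thus $v_\infty=u^\lambda$, the subsequential limit is unique, and we obtain the uniform convergence in (iv) as $A\to+\infty$ (and similarly for $-\infty$); differentiating gives the second statement of (iv), and (ii), (iii) follow immediately. Part (i) is then a corollary: $L=\sqrt{\lambda^2-1}$ must be real, so $\lambda\geq1$; and if $\lambda=1$ then $L=0=\ell_+(x_1)=\ell_-(x_1)$, which contradicts strict monotonicity of $u_{x_2}(x_1,\cdot)$.

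\emph{Parts (v), (vi), and the main obstacle.} For (v), writing $H=(1+|Du|^2)^{-1/2}$ and using the uniform convergence $u_{x_1}(x_1,x_2)\to\lambda\tan(x_1/\lambda)$ from (iv), I get $H(P)\to\cos(x_1/\lambda)/\lambda\leq(R-|x_1|)/\lambda^2$ as $|x_2|\to\infty$; the upper bound $H\leq R-|x_1|$ is then propagated to all of $\mathcal{S}^\lambda$ by a comparison with a suitable grim barrier, while the lower bound $H\geq\theta(\e)$ on $|x_1|\leq R-\e$ follows from the same asymptotic statement together with compactness and $H>0$ for bounded $|x_2|$. For (vi), I would apply Alexandrov's moving plane method in the $x_1$-direction: for $\mu$ close to $R$, the reflection $u(2\mu-x_1,x_2)$ trivially majorizes $u(x_1,x_2)$ on the appropriate half-strip; decreasing $\mu$ and applying the strong maximum principle to the linear elliptic operator obtained by subtracting \eqref{0.26} for $u$ and its reflection (with part (iv) providing control as $x_2\to\pm\infty$), the reflection reaches $\mu=0$, yielding symmetry and the strict monotonicity $u_{x_1}>0$ for $x_1>0$. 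The principal technical hurdle throughout is justifying that $v_\infty$ is defined on the \emph{full} strip $\mathcal{S}^\lambda$ and blows up on $\partial\mathcal{S}^\lambda$, in order to rule out the alternative that $v_\infty$ is a mere slice of a wider grim profile on a larger strip; this relies crucially on combining convexity with the boundary blow-up of $u$ itself.
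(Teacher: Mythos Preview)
Your overall strategy---translate in $x_2$, pass to a limit, identify it as a tilted grim reaper, then deduce (i)--(vi)---matches the paper's, and your identification of the limit via monotone convergence of $u_{x_2}$ (forcing $v_{\infty,x_2x_2}=0$, hence $v_\infty=Lx_2+g(x_1)$) is if anything cleaner than the paper's route through ``the limit contains a line.'' Parts (i) and (vi) are handled essentially as in the paper.

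The step you flag as the principal hurdle, however, is a genuine gap, and your proposed fix does not work. The claim that Theorem~\ref{curvature} plus convexity gives uniform $C^{2,\alpha}_{\text{loc}}$ bounds for $v_A$ on all of $\mathcal{S}^\lambda$ is unjustified: the bound $|A|\le C$ controls the intrinsic geometry of $\Sigma$, but graph estimates for $v_A$ over a fixed compact $K\subset\mathcal{S}^\lambda$ require a uniform gradient bound on $K$---which is precisely the ``no width drop'' statement, so the reasoning is circular. Nor does ``boundary blow-up preserved via convexity'' help: a sequence of convex functions each blowing up at $|x_1|=R$ can perfectly well have a limit blowing up at $|x_1|=R'<R$. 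You also implicitly assume $\ell_+(0)<\infty$ when passing to a graph limit at all; nothing in your sketch rules out $|Du(0,A)|\to\infty$.

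The paper supplies both missing pieces. First (Lemma~\ref{pr2.cor}), a tilted-cylinder barrier reprising the proof of Theorem~\ref{th2} locates sequences $x_2^n\to\pm\infty$ along which $H\ge\theta>0$, so the surface limit is a graph over \emph{some} strip $\mathcal{S}^{\lambda'}$ with $\lambda'\le\lambda$, and in particular $\sup_{x_2\ge0}W(0,x_2)<\infty$. Second (Lemma~\ref{maxprinc}), a maximum principle for $W=\sqrt{1+|Du|^2}$: from $\Delta^fH+|A|^2H=0$ one obtains $\mathcal{L}W\ge\tfrac{1}{2W}>0$ for a suitable linear operator $\mathcal{L}$, so bounds on $W$ along $\{x_1=0,\ x_2\ge0\}$ and along $\{x_2=0,\ |x_1|\le R-\delta\}$ propagate to the half-strip $\{|x_1|\le R-\tfrac32\delta,\ x_2\ge0\}$. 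If $\lambda'<\lambda$ this contradicts completeness of the limit over its smaller strip, forcing $\lambda'=\lambda$.

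One minor point on (v): the upper bound $H\le R-|x_1|$ comes directly from $|D(1/W)|\le1$ in the base variables (a consequence of $|A|^2\le H^2$; see Lemma~\ref{grad}) together with $H\to0$ at $\partial\mathcal{S}^\lambda$---no barrier is needed. For the lower bound the paper uses the Harnack inequality $|\nabla^\Sigma\log H|\le1$ (Lemma~\ref{harnack}), though your compactness argument also works once (iv) is in place.
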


\begin{remark}\label{remark1} It was shown in  \cite{Wang11} that  locally strictly convex solutions in strips do exist. The  existence of a locally strictly convex translating soliton in every $\mathcal{S}^{\lambda}$ has recently been proven by Bourni, Langford and Tinaglia in the preprint \cite{BLT18}. Moreover, the existence, uniqueness
and a complete classification of all graphical translators has just appeared in the preprint of Hoffman, Ilmanen, Martin and White \cite{HIMW18}.
\end{remark}

The organization of the paper is as follows. In section \ref{curv} we show that any immersed two-sided mean convex translating soliton is stable. This allows us to use the method of Choi-Schoen \cite{CS85} as modified by Colding-Minicozzi \cite{CM1}, \cite{CM2} to prove a global curvature bound (Theorem \ref{curvature}). This  is needed for the proof of Theorem \ref{conth.2}  in section \ref{pr}, which is based on a delicate maximum principle argument. In section \ref{pr2} we  give the proof of Theorem \ref{th2}. This result also allows us to start the  proof of Theorem \ref{thm.strip} which is long and detailed and contained  in section \ref{strip}.
We wish to thank Theodora Bourni for a careful reading and helpful comments.
\bigskip
\section{stability and curvature estimates. }
\label{curv}
We will need the the following well-known identities that hold on any translating soliton in $\R^{n+1}\,$ (see for example \cite{MSS15}).

\begin{lemma}  \label{lem0.5} Let $\Sigma$ be a two-sided immersed translating soliton in $\R^{n+1}$ with second fundamental form $A$. Let $A=(h_{ij})$ be the second fundamental form of $\Sigma,\,u=x_{n+1}\big |_\Sigma$ and $\Delta^f:=\Delta^{\Sigma}+<\nabla,e_{n+1}>$ be the drift Laplacian on $\Sigma$. Then
\begin{fleqn}
\begin{align*}
&i.  \, \,|\nabla u|^2 = 1-H^2,\, \Delta^{\Sigma} u=H^2\\
&ii.  \,\,\Delta^f A+|A|^2 A=0,\\
&iii. \, \, \Delta^f H+H|A|^2 =0,\\
&iv.\,\,  \Delta^f (|A|)^2 -2|\nabla A|^2 + 2|A|^4 = 0.
\end{align*}
\end{fleqn}
\end{lemma}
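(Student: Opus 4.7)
The identities are standard consequences of the soliton equation combined with general formulas for hypersurfaces, so the plan is essentially to reduce each to a short direct computation, with the drift Laplacian absorbing the extra first-order terms that distinguish the soliton from a minimal surface.

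For (i), I would simply use $u = x_{n+1}|_\Sigma$. The tangential gradient is the tangential part of $e_{n+1}$, i.e.\ $\nabla^\Sigma u = e_{n+1} - \langle N, e_{n+1}\rangle N = e_{n+1} - HN$, so $|\nabla u|^2 = 1 - H^2$. For the Laplacian, I would invoke the standard identity $\Delta^\Sigma F = \vec H$ for the position vector; applied componentwise this gives $\Delta^\Sigma x_{n+1} = \langle \vec H, e_{n+1}\rangle$, and using the soliton equation $\vec H = HN$ together with $H = \langle N, e_{n+1}\rangle$ yields $\Delta^\Sigma u = H^2$.

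For (iii), the cleanest route is the classical formula $\Delta^\Sigma N = -|A|^2 N - \nabla^\Sigma H$ (valid on any two-sided hypersurface). Taking the inner product with $e_{n+1}$, using $H = \langle N, e_{n+1}\rangle$, and noting that $\nabla^\Sigma H$ is tangential so that $\langle \nabla^\Sigma H, e_{n+1}\rangle = \langle \nabla^\Sigma H, e_{n+1}^T\rangle$, gives
\[
\Delta^\Sigma H = -|A|^2 H - \langle \nabla^\Sigma H, e_{n+1}\rangle.
\]
Recognizing the second term on the right as minus the drift term produces $\Delta^f H + H|A|^2 = 0$ immediately.

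For (ii), I would start from Simons' identity
\[
\Delta^\Sigma h_{ij} = \nabla_i \nabla_j H + H h_{ik}h_{kj} - |A|^2 h_{ij},
\]
and differentiate the soliton identity $H = \langle N, e_{n+1}\rangle$ twice using the Weingarten equations to write $\nabla_i \nabla_j H$ in terms of $h_{ij}$, $\nabla h$, and the tangential projection of $e_{n+1}$. The resulting terms involving $e_{n+1}^T$ are precisely $-\langle \nabla h_{ij}, e_{n+1}\rangle$, which are absorbed into the drift Laplacian $\Delta^f$, while the algebraic terms cancel the Simons cross term $H h_{ik}h_{kj}$. This yields $\Delta^f A + |A|^2 A = 0$. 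Finally, (iv) is a one-line Bochner computation: since the drift Laplacian still satisfies $\Delta^f |A|^2 = 2\langle A, \Delta^f A\rangle + 2|\nabla A|^2$, substituting (ii) gives $\Delta^f |A|^2 = -2|A|^4 + 2|\nabla A|^2$.

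The only step requiring care is (ii), where the main obstacle is bookkeeping: matching sign conventions in the Weingarten and Codazzi equations so that the extra first-order derivatives produced by differentiating $\langle N, e_{n+1}\rangle$ exactly assemble into $\langle \nabla h_{ij}, e_{n+1}\rangle$. Once this is checked, (iii) and (iv) follow by tracing and by Bochner respectively, providing two independent consistency checks on the sign conventions.
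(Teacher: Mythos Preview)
Your proof is correct and follows the standard route. The paper itself does not supply a proof of this lemma: it states the identities as well-known and refers the reader to \cite{MSS15}. Your computations for (i)--(iv) are exactly the expected ones; in particular your handling of (ii) via Simons' identity combined with two differentiations of $H=\langle N,e_{n+1}\rangle$ and Codazzi is the standard derivation, and the cancellation of the $Hh_{ik}h_{kj}$ term against the corresponding term from $\nabla_i\nabla_j H$ is precisely what makes the drift Laplacian the natural operator here.
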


It is well known that a translating soliton $\Sigma$ with respect to the direction $e_{n+1}$ in $\R^{n+1}$ is a critical
point of the weighted area functional
\[ \tilde{\mathcal{A}} (\Sigma) =\int_{\Sigma}e^{x_{n+1}}dv\]
 and is in fact minimal with repect to the weighted Euclidean metric $e^{x_{n+1}}\delta$ on $\R^{n+1}$.
The second variation of $\tilde{\mathcal{A}}$ with respect to a compactly supported normal variation $\eta N$ is easily computed to be
\[ \tilde{\mathcal{A}}''(0)=\int_{\Sigma} (|\nabla \eta|^2-|A|^2 \eta^2)e^{x_{n+1}}dv=\int_{\Sigma}-\eta L\eta \,e^{x_{n+1}}\,dv~,\]
where $L\eta=e^{-x_{n+1}}\text{div}^{\Sigma}(e^{x_{n+1}}\nabla \eta)+|A|^2 \eta\,$ is the associated stability operator.
\begin{proposition}\label{prop1} i. Let $\Sigma\subset \R^{n+1}$ be a complete immersed two-sided translating soliton with respect to $e_{n+1}$ with $H\geq 0$. Then $\Sigma$ is strictly stable, that is, $\lambda_1(-L)(D)>0$ on any compact subdomain $D \subset \Sigma$.\\
ii. The following are equivalent:\\
a. There exists a positive solution $v$ of $Lv=0$ for every bounded D.\\
b. $\lambda_1(-L)(D)\geq 0$ for every bounded D.\\
c.  $\lambda_1(-L)(D)>0$ for  every bounded D .
\end{proposition}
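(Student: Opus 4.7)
My plan is to build both parts of the proposition around a single observation from Lemma~\ref{lem0.5}(iii): the mean curvature itself satisfies $LH = 0$. Combined with a Fischer--Colbrie style substitution trick, this fact will drive everything.

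For part (i), since $H \geq 0$ satisfies $\Delta^f H = -|A|^2 H \leq 0$, the strong minimum principle forces either $H \equiv 0$ (a degenerate case corresponding to $\Sigma$ being a minimal cylinder in the $e_{n+1}$ direction, which I would set aside) or $H > 0$ everywhere. Setting $v := H$, we then have a globally positive solution of $Lv = 0$. Let $\phi$ be the principal Dirichlet eigenfunction of $-L$ on a compact $D$, positive on $D^\circ$ and vanishing on $\partial D$. Writing $\phi = vw$, a short integration by parts against the weighted measure $e^{x_{n+1}}dv$, using $Lv = 0$, produces the identity
\[
\int_D (|\nabla \phi|^2 - |A|^2 \phi^2)\, e^{x_{n+1}}\,dv \;=\; \int_D v^2 |\nabla w|^2\, e^{x_{n+1}}\,dv.
\]
Since $w$ is positive on $D^\circ$ and zero on $\partial D$, it is nonconstant on each component, making the right side strictly positive and yielding $\lambda_1(-L)(D) > 0$.

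For part (ii), (c)$\Rightarrow$(b) is trivial, and (a)$\Rightarrow$(c) is exactly the substitution above applied to the hypothesized positive $v$ instead of to $H$. The implication (b)$\Rightarrow$(c) is the strict domain monotonicity of the principal Dirichlet eigenvalue: given a bounded $D$, pick a bounded $D' \supsetneq D$ with $\partial D$ lying in the interior of $D'$; extending the eigenfunction on $D$ by zero to $D'$ gives a nonnegative competitor for $\lambda_1(-L)(D')$ that cannot itself be an eigenfunction (otherwise it would vanish on an open set, contradicting the strong maximum principle), so $\lambda_1(-L)(D) > \lambda_1(-L)(D') \geq 0$.

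The main work is (c)$\Rightarrow$(a), and this is where I expect the real obstacle. My plan is to exhaust $\Sigma$ by bounded smooth domains $D_1 \subset\subset D_2 \subset\subset \cdots$. Strict stability $\lambda_1(-L)(D_k) > 0$ puts $0$ outside the Dirichlet spectrum of $L$ on $D_k$, so the boundary value problem $L v_k = 0$ in $D_k$ with $v_k = 1$ on $\partial D_k$ has a unique solution, and the maximum principle gives $v_k > 0$. Fix a basepoint $p_0 \in D_1$ and normalize $\tilde v_k := v_k / v_k(p_0)$; the Harnack inequality applied to the positive solution $\tilde v_k$ of the linear elliptic operator $L$ yields uniform local two-sided $C^0$ bounds, which Schauder estimates promote to $C^{2,\alpha}_{\mathrm{loc}}$ bounds. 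A diagonal subsequence then converges to a positive $v$ on all of $\Sigma$ with $Lv = 0$. The only subtlety is keeping the Harnack constants uniform on each fixed compactum as $k \to \infty$, which is routine since the coefficients of $L$ are smooth and any compactum eventually sits deep inside every $D_k$.
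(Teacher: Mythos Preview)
Your proposal is correct and matches the paper's approach: for part (i) the paper uses the equivalent logarithmic form of your substitution (setting $w=\log H$ and obtaining $\int(\eta^2|A|^2-|\nabla\eta|^2)e^{x_{n+1}}=-\int|\eta\nabla w-\nabla\eta|^2 e^{x_{n+1}}<0$), which is the same Fischer--Colbrie identity you wrote as $\phi=Hw$. For part (ii) the paper simply cites Fischer--Colbrie--Schoen without details, and your sketch is a faithful outline of that argument.
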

\begin{proof}
i. We may assume that $H>0$. Then $w=\log{H}$ satisfies $ \Delta w+<\nabla w, e_{n+1}>+|A|^2=-|\nabla w|^2$.
Hence
\begin{align*}
\int_{\Sigma} (\eta^2 |A|^2 -|\nabla \eta |^2) e^{x_{n+1}}dv&=-\int_{\Sigma}(\eta^2 |\nabla w|^2-2\eta <\nabla \eta, \nabla w>+|\nabla \eta|^2)e^{x_{n+1}}dv\\
&=-\int_{\Sigma} |\eta \nabla w-\nabla \eta |^2 e^{x_{n+1}}dv <0.
\end{align*}
ii. The proof is a straightforward modification of that of  Fischer-Colbrie and Schoen \cite{FS} and will not be given.
\end{proof}

We will need the following corollary for the case at hand $n=2$.
\begin{corollary} \label{cor1.1}
Let $\mathcal{B}_{\rho}(P)$ be an intrinsic ball  in $ \Sigma $ with $\rho<2\pi $. Then $ \mathcal{B}_{\rho}(P)$
is disjoint from the conjugate locus of P and
\be \label{eq10}
 \int_{\Sigma} f^2 |A|^2 dv \leq e^{2\rho} \int_{\Sigma} |\nabla f|^2 ~dv
 \ee
for $f \in H^1_0(\mathcal{B}_{\rho}(P))$.
\end{corollary}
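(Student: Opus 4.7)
The plan is to extract both conclusions from two simple ingredients: the pointwise estimate $|\nabla u|\leq 1$ given by Lemma \ref{lem0.5}.i (which forces both a Lipschitz bound on $u$ and a Gauss curvature bound on $\Sigma$), together with the weighted stability inequality established inside the proof of Proposition \ref{prop1}.

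For the conjugate locus statement, I would first note that since $|\nabla u|^2 = 1-H^2 \geq 0$, we have $H \leq 1$ on $\Sigma$. Writing $\kappa_1,\kappa_2$ for the principal curvatures, Cauchy--Schwarz gives $(\kappa_1+\kappa_2)^2 \leq 2(\kappa_1^2+\kappa_2^2)$, i.e.\ $|A|^2 \geq H^2/2$, so the Gauss equation $2K = H^2 - |A|^2$ implies
\[
K \;\leq\; \tfrac{H^2}{4} \;\leq\; \tfrac14.
\]
The Rauch/Sturm comparison theorem then guarantees that no geodesic emanating from $P$ can have a conjugate point within intrinsic distance $\pi/\sqrt{1/4}=2\pi$, which is exactly the claim for $\rho<2\pi$.

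For the integral inequality, I would observe that the computation inside the proof of Proposition \ref{prop1}.i actually establishes the stronger weighted stability
\[
\int_{\Sigma} f^2 |A|^2\, e^{x_{n+1}}\,dv \;\leq\; \int_{\Sigma} |\nabla f|^2\, e^{x_{n+1}}\,dv
\]
for every compactly supported Lipschitz $f$. To remove the weight on $\mathcal{B}_{\rho}(P)$, I would use that $u=x_{n+1}|_\Sigma$ is $1$-Lipschitz with respect to the intrinsic distance (since $|\nabla u|\leq 1$). Setting $u_0=u(P)$, this yields $u_0-\rho \leq u \leq u_0+\rho$ on $\mathcal{B}_{\rho}(P)$, so for $f \in H^1_0(\mathcal{B}_{\rho}(P))$ one bracket of $e^u$ gives
\[
\int f^2 |A|^2\,dv \;\leq\; e^{-(u_0-\rho)}\!\!\int f^2|A|^2 e^{u}\,dv \;\leq\; e^{-(u_0-\rho)}\!\!\int |\nabla f|^2 e^{u}\,dv \;\leq\; e^{2\rho}\!\!\int |\nabla f|^2\,dv,
\]
which is the desired bound. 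There is no real obstacle: the argument is a short chain of inequalities, and the only mildly delicate point is the conjugate locus claim, which is needed precisely to ensure that $\mathcal{B}_{\rho}(P)$ is an honest domain of integration over which the weighted stability inequality can be applied.
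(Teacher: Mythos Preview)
Your proof is correct and follows essentially the same route as the paper: the Gauss curvature bound $K\leq H^2/4\leq 1/4$ gives the conjugate locus statement via comparison geometry, and the $1$-Lipschitz bound on $u=x_3|_\Sigma$ converts the weighted stability inequality of Proposition~\ref{prop1} into the unweighted one with the $e^{2\rho}$ factor. One small quibble: the conjugate locus claim is not actually needed for the integral inequality (the weighted stability holds for any compactly supported $f$); it is a separate conclusion used later in Proposition~\ref{prop2}.
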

\begin{proof} Let $K=K^{\Sigma}$ denote the Gauss curvature of $\Sigma$. Since $K\leq \frac{H^2}4\leq \frac14$, the first statement follows from standard comparison geometry.
Also $|\nabla x_3|^2=1-H^2 \leq 1$, so $|x_3(Q)-x_3(P)|\leq \rho $ for any $Q\in \mathcal{B}_{\rho}(P)$.
Hence $     e^{-\rho} e^{x_3}(P) \leq  e^{x_3}(Q) \leq e^{\rho} e^{x_3}(P) $
 and the result follows from Proposition \ref{prop1}.
 \end{proof}

We now follow the  Colding-Minicozzi method \cite{CM1, CM2} with appropriate modification, to prove  intrinsic area bounds and then curvature bounds. For two dimensional graphs, such curvature estimates follow immediately from the work of Leon Simon \cite{S77}, see also \cite{Sh}.
 \begin{proposition} \label{prop2} Let $\Sigma \subset \R^3$ be a two-sided immersed translating soliton with $H\geq 0$ and let $\mathcal{B}_{\rho}(P)$ be a topological disk in $\Sigma$. Then $\mathcal{B}_{\rho}(P)$ is disjoint from the cut locus of P for $e^{2\rho}<2$ and
\be \label{eq20}
 \begin{aligned}
i. \,\, & \frac{\text{area}(\mathcal{B}_{\rho}(P))}{\rho^2} \leq  2\pi~,\\
ii.\,\, &\int_{\mathcal{B}_{\mu^2 \rho}(P)}|A|^2 ~dv \leq 2\pi \{(\log{\frac1{\mu}})^{-2}+2(\log{\frac1{\mu}})^{-1}\}\,\, \text{for  $\mu\in (0,1)$.}
 \end{aligned}
 \ee
 \end{proposition}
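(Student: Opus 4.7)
The plan is to follow the Colding--Minicozzi method: combine the weighted stability estimate of Corollary \ref{cor1.1} with Gauss--Bonnet and coarea identities on intrinsic geodesic balls in $\Sigma$. First I verify that $\mathcal{B}_\rho(P)$ is disjoint from the cut locus of $P$. Since $K\le H^2/4\le 1/4$ and $\rho<\frac12\log 2<\pi$, Sturm comparison (as in the proof of Corollary \ref{cor1.1}) guarantees disjointness from the conjugate locus; the hypothesis that $\mathcal{B}_\rho(P)$ is a topological disk then forces the exponential map at $P$ to be a diffeomorphism onto $\mathcal{B}_\rho(P)$, so in particular there are no cut points. We may therefore work in geodesic polar coordinates centered at $P$.

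Set $A(s)=\text{area}(\mathcal{B}_s(P))$, $L(s)=\text{length}(\partial\mathcal{B}_s(P))$, and $I(s)=\int_{\mathcal{B}_s(P)}K\,dv$. Combining the first variation of arc length with Gauss--Bonnet on the topological disk $\mathcal{B}_s(P)$ gives $L'(s)=2\pi-I(s)$, while coarea gives $A'(s)=L(s)$. With the initial conditions $L(0)=A(0)=0$, two integrations yield
\be
A(\rho)=\pi\rho^2-\int_0^\rho (\rho-s)\,I(s)\,ds.
\ee

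For part (i), apply the stability estimate \eqref{eq10} with the Lipschitz test function $f(r)=(\rho-r)_+$, where $r=\text{dist}_\Sigma(P,\cdot)$, so that $|\nabla f|\equiv 1$ on $\mathcal{B}_\rho(P)$. Using $|A|^2=H^2-2K\geq -2K$ together with the identity $\int_{\mathcal{B}_\rho}(\rho-r)^2 K\,dv=2\int_0^\rho (\rho-s)\,I(s)\,ds$ (obtained by coarea and integration by parts), the stability inequality rearranges to
\be
-\int_0^\rho (\rho-s)\,I(s)\,ds\le \frac{e^{2\rho}}{4}\,A(\rho).
\ee
Combining with the identity for $A(\rho)$ gives $A(\rho)(1-e^{2\rho}/4)\le \pi\rho^2$, and the hypothesis $e^{2\rho}<2$ delivers $A(\rho)\le 2\pi\rho^2$.

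For part (ii), use a logarithmic cutoff: $f\equiv 1$ on $\mathcal{B}_{\mu^2\rho}(P)$, $f(r)=\log(\rho/r)/(2\log(1/\mu))$ on the annulus $\mathcal{B}_\rho\setminus\mathcal{B}_{\mu^2\rho}$, and $f\equiv 0$ outside $\mathcal{B}_\rho$. Then $|\nabla f|=1/(2r\log(1/\mu))$ on the annulus, so by coarea
\[
\int|\nabla f|^2\,dv=\frac{1}{4(\log(1/\mu))^2}\int_{\mu^2\rho}^{\rho}\frac{A'(s)}{s^2}\,ds.
\]
Integration by parts in $s$, discarding the negative boundary term at $s=\mu^2\rho$, and inserting the area bound $A(s)\le 2\pi s^2$ from part (i) bounds the inner integral by $2\pi+8\pi\log(1/\mu)$. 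Applying \eqref{eq10} together with $e^{2\rho}<2$ and $f\equiv 1$ on $\mathcal{B}_{\mu^2\rho}(P)$ then yields the claimed curvature bound. The main delicate point is the bookkeeping of the Gauss--Bonnet identity and the integration by parts so that the $e^{2\rho}<2$ hypothesis produces precisely the stated constants; the cut-locus disjointness, though essential for Gauss--Bonnet to apply, is routine once conjugate points have been ruled out.
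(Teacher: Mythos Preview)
Your treatment of parts (i) and (ii) is correct and follows the paper's Colding--Minicozzi scheme almost verbatim (your logarithmic cutoff runs from $\mu^2\rho$ to $\rho$ rather than the paper's $\mu^2\rho$ to $\mu\rho$, but this is immaterial and in fact gives a slightly sharper constant).

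The gap is in your cut--locus argument. You assert that ``no conjugate points'' together with ``$\mathcal{B}_\rho(P)$ is a topological disk'' forces $\exp_P:B_\rho(0)\to\mathcal{B}_\rho(P)$ to be a diffeomorphism. This does not follow: a surjective local diffeomorphism between open disks need not be injective unless it is proper, and properness is exactly what fails when there are cut points (a geodesic of length close to $\rho$ can terminate well inside $\mathcal{B}_\rho(P)$). No Cartan--Hadamard type statement applies here, since you have neither global simple connectivity of $\Sigma$ nor $K\le 0$.

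The paper closes this gap by using the area bound itself. One first proves part (i) for all radii $s\le r_0:=\mathrm{inj}_P(\Sigma)$, where geodesic polar coordinates are certainly valid. If $r_0\le\rho$, take a closest cut point $Q$; since $Q$ is not conjugate, Klingenberg's lemma gives two minimizing geodesics from $P$ to $Q$ that fit together smoothly at $Q$, forming a geodesic digon bounding a disk $D\subset\mathcal{B}_{r_0}(P)$. Gauss--Bonnet on $D$ (the only exterior angle is at $P$, and it is at most $\pi$) gives $\int_D K\ge\pi$, hence $\mathrm{area}(D)\ge 4\pi$ because $K\le\tfrac14$. But part (i) at radius $r_0$ yields $\mathrm{area}(\mathcal{B}_{r_0}(P))\le 2\pi r_0^2<4\pi$ since $r_0\le\rho<\tfrac12\log 2<\sqrt{2}$, a contradiction. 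Thus $r_0>\rho$. Your proof becomes complete once you insert this argument in place of the unjustified diffeomorphism claim.
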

\begin{proof} We first prove $\mathcal{B}_{\rho}(P)$ is disjoint from the cut locus $C(P)$ of P, that is the injectivity radius of P  satisfies $r_0:=\text{inj}_{p}(\Sigma)>\rho$. Suppose  for contradiction that $Q\in \partial \mathcal{B}_{r_0}(P)$ is a cut locus of P and $r_0\leq \rho$ . We know by Corollary \ref{cor1} that Q is not in the conjugate locus of P so by Klingenberg's lemma (see for example Lemma 5.7.12 of \cite{P})  there are two mininimizing geodesics from P to Q which fit together smoothly at Q with a possible corner at P and  bounding a domain $D \subset \mathcal{B}_{r_0}(P)$. By Gauss-Bonnet,
\[ \frac14 \text{area}(D)\geq \int_{D}K~dv=2\pi-\int_{\partial D}\kappa_g~d\sigma \geq \pi~,\]
hence $\text{area}(\mathcal{B}_{r_0}(P))>\text{area}(D)\geq 4\pi$. On the other hand by \eqref{eq20} with $\rho=r_0$  (proved below), $\text{area}(\mathcal{B}_{r_0}(P))<2\pi r_0^2$. Therefore $r_0>\sqrt{2}$, contradicting $r_0 \leq \rho<\frac12 \log{2}$.

We now prove the stated inequalities. Let $l(s)$ be the length of $\partial \mathcal{B}_{s}(P)$ and $K(s)=\int_{\mathcal{B}_{s}(P)}K~dv$. By Gauss-Bonnet,
\be \label{eq30}
l'(s)=\int_{\partial \mathcal{B}_{s}(P)}\kappa_g~d\sigma=2\pi-K(s)~.
\ee
 For $r=d(P,x),\, f(x)=\eta(r),\, \eta,-\eta'\geq 0, \eta(\rho)=0$, we use the stability inequality \eqref{eq10}
and write
\[ |A|^2=H^2-2K \geq -2K,\]
to obtain (for $\rho\leq r_0$)
\be \label{eq40}
-2\int_{\mathcal{B}_{\rho}(P)}Kf^2 \leq 2\int_{\mathcal{B}_{\rho}(P)}|\nabla f|^2.\\
\ee
By the coarea formula,
\be \label{eq50}
\begin{aligned}
&\int_{\mathcal{B}_{\rho}(P)}Kf^2=\int_0^{\rho}\eta^2(s)\int_{\partial \mathcal{B}_{s}(P)}Kd\sigma ds=\int_0^{\rho}\eta^2(s)K'(s)ds,\\
&\int_{\mathcal{B}_{\rho}(P)}|\nabla f|^2=\int_0^{\rho}\int_{\partial \mathcal{B}_{s}(P)}|\nabla f|^2d\sigma ds=\int_0^{\rho}(\eta')^2(s)l(s)ds.
\end{aligned}
\ee

For $\eta(r)=1-\frac{r}{\rho}$, using \eqref{eq30}-\eqref{eq50} this gives
\be \label{eq60}
-\frac{4}{\rho}\int_0^{\rho}(2\pi-l'(s))(1-\frac{s}{\rho})ds \leq 2\frac{\text{area}(\mathcal{B}_{\rho}(P))}{\rho^2}.
\ee
By integration by parts,
\be \label{eq70}
\int_0^{\rho}(2\pi-l'(s))(1-\frac{s}{\rho})ds =\pi \rho-\frac{\text{area}(\mathcal{B}_{\rho}(P))}{\rho}.
\ee
Finally combining \eqref{eq60}, \eqref{eq70} and simplifying gives  inequality i. in \eqref{eq20}.
For part ii. we use the logarithmic cutoff
\be \label{eq80} \eta(s)=\left \{
\begin{aligned}&\hspace{.2in}1 \hspace{.5in} &\text{if \,\,$s\leq \mu^2 \rho$}\\
&\frac{\log{\frac{s}{\rho}}} {\log{\mu}}-1 &\text{if\,\, $ \mu^2 \rho<s<\mu \rho$}\\
&\hspace{.2in}0  &\text{if \,\,$s>\mu \rho$}
\end{aligned}\right.
\ee
in \eqref{eq10}. Then
\be \label{eq90}
\begin{aligned}
&\int_{\mathcal{B}_{\mu^2\rho}(P)}|A|^2 \leq \frac2{(\log{\mu})^2}\int_{\mu^2 \rho}^{\mu \rho}\frac{l(s)}{s^2}ds\\
&\leq  \frac2{(\log{\mu})^2}\big\{\frac{ \text{area}(\mathcal{B}_{s}(P))}{s^2}\Bigg|_{\mu^2 \rho}^{\mu \rho}+2\int_{\mu^2 \rho}^{\mu \rho}\frac{\text{area}(\mathcal{B}_{s}(P))}{s^3}ds\big\}\\
&\leq 4\pi \{\frac1{(\log{\mu})^2}+\frac2{\log{\frac1{\mu}}}\}
\end{aligned}
\ee
by \eqref{eq20} part i.
\end{proof}

For later use we will need two well known lemmas; the first is an extrinsic mean value inequality (for a proof see \cite{CM2} p. 26-27) and the second one says that curvature bounds implies graphical with intrinsic balls
$\mathcal{B}_s$ and extrinsic balls $B_s$ related (see \cite{CM2} Lemma 2.4).

\begin{lemma}\label{meanvalue} Let $ \Sigma \subset \R^3$ be an embedded surface with $x_0\in \Sigma,\, \,B_s(x_0)\cap \partial \Sigma=\emptyset$. Suppose the mean curvature of $\Sigma$ satisfies
$|H| \leq C$ and $f\geq 0$ is a smooth function on $\Sigma$ satisfying $\Delta^{\Sigma} f\geq -\lambda s^{-2} f$.
Then
\[ f(x_0) \leq \frac{e^{( \frac{\lambda}4+Cs)}}{\pi s^2} \int_{B_s(x_0)\cap \Sigma} f dv ~.\]
\end{lemma}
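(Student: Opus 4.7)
The plan is to prove this via the standard extrinsic mean value monotonicity argument, in the style of Michael--Simon / Colding--Minicozzi. Write $r(x) := |x - x_0|$ and, for $0 < t \leq s$, set
\[ I(t) := \int_{B_t(x_0) \cap \Sigma} f \, dv. \]
The goal is to show that the weighted quantity
\[ G(t) := e^{Ct + \lambda t^2/(4 s^2)}\, \frac{I(t)}{t^2} \]
is nondecreasing on $(0,s]$, and then to compare $G(0^+)$ with $G(s)$.

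First I would derive a differential identity for $I(t)$. Applying the divergence theorem on $\Sigma \cap B_t(x_0)$ to the tangential vector field $f (x-x_0)^T$, and using the first-variation identity for a two-dimensional surface in $\R^3$,
\[ \text{div}^{\Sigma}\bigl((x-x_0)^T\bigr) = 2 - H \langle N, x - x_0 \rangle, \]
together with the coarea formula $I'(t) = \int_{\partial B_t \cap \Sigma} f / |\nabla^{\Sigma} r|\, d\sigma$, one obtains an identity of the form
\[ t\, I'(t) - 2 I(t) \;=\; -\int_{B_t\cap \Sigma}\bigl[\, \langle (x-x_0)^T,\nabla^{\Sigma} f\rangle \,+\, f\, H\langle N, x-x_0\rangle\,\bigr]\, dv \;+\; E(t), \]
with $E(t) \geq 0$ coming from replacing $|\nabla^\Sigma r|$ by $1$ in the boundary integral. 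The term involving $H$ is controlled in absolute value by $C t \cdot I(t)$, while the gradient term is handled by integrating by parts once more and invoking the hypothesis $\Delta^\Sigma f \geq -\lambda s^{-2} f$; a Cauchy--Schwarz step, with the weights chosen to match the $s$-scale in the hypothesis, produces a bound of the form $\tfrac{\lambda t^2}{2 s^2}\, I(t)$. This yields the differential inequality
\[ \frac{d}{dt}\left(\frac{I(t)}{t^2}\right) \;\geq\; -\left( C + \frac{\lambda t}{2 s^2}\right)\frac{I(t)}{t^2}, \]
which is precisely the statement $G'(t) \geq 0$.

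To finish, I would compute $\lim_{t \to 0^+} G(t)$. Since $\Sigma$ is smooth at $x_0$ and $f$ is continuous, the local flatness of $\Sigma$ gives $\text{area}(\Sigma \cap B_t(x_0)) = \pi t^2 + O(t^3)$, so $I(t)/t^2 \to \pi f(x_0)$ and $G(0^+) = \pi f(x_0)$. Monotonicity then yields
\[ \pi f(x_0) \;\leq\; G(s) \;=\; e^{Cs + \lambda/4}\, \frac{I(s)}{s^2}, \]
which rearranges to the claimed inequality.

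The main obstacle is purely bookkeeping: extracting the sharp constant $\lambda/4 + Cs$ in the exponent. The $Cs$ piece is clean and comes directly from the $|H| \leq C$ bound applied pointwise to $\langle \vec{H}, x - x_0 \rangle$; the delicate part is producing exactly $\lambda/4$ rather than some larger multiple of $\lambda$, which forces one to use Cauchy--Schwarz with the weight $t/(2s)$ at just the right strength when absorbing $\langle (x-x_0)^T, \nabla^\Sigma f\rangle$ against $\int \Delta^\Sigma f \cdot (\cdots)$. Once the integrating factor is chosen to be $e^{Ct + \lambda t^2/(4 s^2)}$, everything aligns and the monotonicity becomes exact.
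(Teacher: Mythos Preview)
The paper does not give its own proof of this lemma; it simply cites Colding--Minicozzi's book (reference \cite{CM2}, pp.~26--27). Your outline is exactly the standard monotonicity argument found there, and the differential inequality
\[
\frac{d}{dt}\Bigl(\frac{I(t)}{t^2}\Bigr)\ \geq\ -\Bigl(C+\frac{\lambda t}{2s^2}\Bigr)\frac{I(t)}{t^2}
\]
that you claim is correct with the sharp constants, so the integrating factor $e^{Ct+\lambda t^2/(4s^2)}$ and the evaluation at $t=s$ give precisely the stated bound.

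One clarification on the step you flag as delicate: no Cauchy--Schwarz or weight optimization is needed for the gradient term. Since $(x-x_0)^T=\tfrac12\nabla^{\Sigma}(r^2-t^2)$, a single integration by parts (the boundary term vanishes because $r^2-t^2=0$ on $\partial B_t$) gives
\[
\int_{B_t\cap\Sigma}\bigl\langle (x-x_0)^T,\nabla^{\Sigma}f\bigr\rangle
=\tfrac12\int_{B_t\cap\Sigma}(t^2-r^2)\,\Delta^{\Sigma}f
\ \geq\ -\frac{\lambda}{2s^2}\int_{B_t\cap\Sigma}(t^2-r^2)f
\ \geq\ -\frac{\lambda t^2}{2s^2}\,I(t),
\]
which produces the factor $\lambda/4$ in the exponent directly. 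With this in place your argument is complete and matches the cited reference.
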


\begin{lemma}\label{graph} Let $ \Sigma \subset \R^3$ be an immersed surface with $16s^2 \sup_{\Sigma} |A|^2 \leq 1$. If $P\in \Sigma$ and $\text{dist}^{\Sigma}(P,\partial \Sigma)\geq 2s$, then\\
i. $\mathcal{B}_{2s}(P)$ can be written as a graph of a function u over $T_{P}\Sigma$ with $|\nabla u|\leq 1$ and $ \sqrt{2}s |\text{Hess}_u| \leq 1$;\\
ii. The connected component of $B_s(P)\cap \Sigma$ containing P is contained in $\mathcal{B}_{2s}(P)$.
\end{lemma}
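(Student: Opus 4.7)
The plan is to use the curvature bound $|A|\leq \frac{1}{4s}$ to control the rotation of the unit normal along intrinsic paths from $P$, deduce the graph representation with controlled gradient and Hessian for part (i), and then compare intrinsic and extrinsic distances on that graph to prove part (ii).

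For part (i), I would fix an arbitrary $Q\in \mathcal{B}_{2s}(P)$ and a unit-speed geodesic $\gamma:[0,\ell]\to \Sigma$ with $\ell\leq 2s$ joining $P$ to $Q$. Choosing a local unit normal $N$ near $\gamma$, the Weingarten relation gives $|\frac{d}{dt}N(\gamma(t))|=|S_{\gamma(t)}(\dot\gamma(t))|\leq |A|\leq \frac{1}{4s}$ in $\R^3$, so integrating yields $|N(Q)-N(P)|\leq \frac{\ell}{4s}\leq \frac{1}{2}$. Hence the angle $\theta$ between $T_Q\Sigma$ and $T_P\Sigma$ satisfies $\cos\theta = \langle N(Q),N(P)\rangle \geq \frac{7}{8}$. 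Choosing coordinates so that $T_P\Sigma$ is horizontal, the surface is locally a graph $x_3=u(x_1,x_2)$ with $\sec\theta =\sqrt{1+|\nabla u|^2}\leq \frac{8}{7}$, giving $|\nabla u|^2\leq \frac{15}{49}<1$ pointwise.

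The uniform angle bound also shows that the orthogonal projection $\pi:\mathcal{B}_{2s}(P)\to T_P\Sigma$ is a local diffeomorphism. Global injectivity — the part I expect to be the main obstacle — follows by a standard connectedness argument: the set of points in $\mathcal{B}_{2s}(P)$ at which $\pi$ restricted to some neighborhood is injective into $T_P\Sigma$ is open and closed in $\mathcal{B}_{2s}(P)$, using the uniform tilt to preclude sheets from folding back. Once $\pi$ is bijective onto its image, $\mathcal{B}_{2s}(P)$ is a graph as claimed. For the Hessian bound, a direct computation in the coordinates where $\nabla u=(p,0)$ yields the pointwise identity $|A|^2 \geq \frac{|\text{Hess}_u|^2}{(1+|\nabla u|^2)^3}$, so $|\text{Hess}_u|^2\leq (1+|\nabla u|^2)^3|A|^2\leq 8|A|^2\leq \frac{1}{2s^2}$, which rearranges to $\sqrt{2}s|\text{Hess}_u|\leq 1$.

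For part (ii), on the graph $\mathcal{B}_{2s}(P)$ with $|\nabla u|\leq 1$, any curve has length at most $\sqrt{1+|\nabla u|^2}$ times its horizontal projection length, while the Euclidean distance between two points is at least their horizontal separation; consequently $d^\Sigma(x,y)\leq \sqrt{2}\,|x-y|$ for $x,y\in \overline{\mathcal{B}_{2s}(P)}$. Now suppose $Q$ lies in the connected component of $B_s(P)\cap \Sigma$ containing $P$ but $Q\notin \mathcal{B}_{2s}(P)$, and take a continuous path in $B_s(P)\cap \Sigma$ from $P$ to $Q$. Let $Q'$ be its first hitting point of $\partial \mathcal{B}_{2s}(P)$, so the subpath from $P$ to $Q'$ lies in $\overline{\mathcal{B}_{2s}(P)}$ and $d^\Sigma(P,Q')=2s$. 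But $Q'\in B_s(P)$ gives $|P-Q'|<s$, so the distance estimate above yields $d^\Sigma(P,Q')\leq \sqrt{2}\,s<2s$, a contradiction. The main difficulty throughout is the global injectivity of the projection in (i); once that is in hand, both the curvature-to-Hessian conversion and the intrinsic/extrinsic distance comparison are routine.
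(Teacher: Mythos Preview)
The paper does not prove this lemma; it is quoted as Lemma~2.4 of Colding--Minicozzi [CM2] and stated without proof. Your approach---bounding the rotation of the normal along paths from $P$ via $|A|\le \frac{1}{4s}$ to get a uniform tilt, deducing graphicality with the gradient and Hessian bounds, then comparing intrinsic and extrinsic distances for (ii)---is precisely the standard argument in that reference. The pointwise estimates you give ($\langle N(Q),N(P)\rangle\ge 7/8$, $|\nabla u|^2\le 15/49$, and $|\mathrm{Hess}_u|^2\le (1+|\nabla u|^2)^3|A|^2\le \frac{1}{2s^2}$) are all correct.

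Two gaps remain. First, your open--closed argument for global injectivity of $\pi$ is vacuous as written: since the tilt bound already makes $\pi$ a local diffeomorphism, \emph{every} point of $\mathcal{B}_{2s}(P)$ has a neighborhood on which $\pi$ is injective, so the set you describe is trivially all of $\mathcal{B}_{2s}(P)$ and nothing about global injectivity follows. The clean route is to let $r$ be the supremum of radii for which the graph is defined over the horizontal disk $B_r^{T_P\Sigma}(0)$ and use the tilt bound to show that if $r<\sqrt{2}s$ then the graph extends, forcing $r\ge\sqrt{2}s$. Second, your distance comparison $d^\Sigma(P,Q')\le\sqrt{2}\,|P-Q'|$ in (ii) requires lifting the straight segment $[0,\pi(Q')]$ to the graph, which presupposes that this segment lies in the domain $\Omega=\pi(\mathcal{B}_{2s}(P))$; this star-shapedness is exactly what $\Omega\supset B^{T_P\Sigma}_{\sqrt{2}s}(0)$ from the previous point provides (since $|\pi(Q')|\le|P-Q'|<s$). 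With these fixes the argument is complete and matches the cited source.
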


\begin{proposition}(Choi-Schoen type curvature bound) \label{prop3}  Let $\Sigma \subset \R^3$ be a two-sided immersed translating soliton and let $\mathcal{B}_{\rho}(P)$ be  disjoint from the cut locus of P.  Then there exists $\e,\, \tau<\frac{\sqrt{\e}}{2\pi}<\rho$ such that if for all $r_0\leq \tau$, there holds $\int_{\mathcal{B}_{r_0}(P)}|A|^2\leq  \e$, then for all $ \,0<\sigma \leq r_0,\,\, y \in \mathcal{B}_{r_0-\sigma}(P)$ we have $|A|^2(y) \leq  \sigma^{-2}$.
\end{proposition}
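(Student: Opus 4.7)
The plan is to run the classical Choi--Schoen/Colding--Minicozzi $\e$-regularity scheme, adapted to the drift--Laplacian framework inherent to translating solitons, via point-picking plus the mean-value inequality.

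\emph{Simons-type differential inequality.} From Lemma~\ref{lem0.5}(iv), $\Delta^{f}|A|^{2} = 2|\nabla A|^{2} - 2|A|^{4}$. Writing $\Delta^{\Sigma} = \Delta^{f} - \langle \nabla\,\cdot\,, e_{3}^{\top}\rangle$ and using $|e_{3}^{\top}|^{2} = 1-H^{2} \leq 1$ together with Kato's inequality and Young's inequality
\[ |\langle \nabla |A|^{2}, e_{3}^{\top}\rangle| \leq 2|A||\nabla A| \leq |A|^{2} + |\nabla A|^{2}, \]
I would absorb the $|\nabla A|^{2}$ term into the Simons term to obtain the pointwise bound
\[ \Delta^{\Sigma}|A|^{2} \geq -|A|^{2} - 2|A|^{4} \]
valid everywhere on $\Sigma$ with no a priori curvature hypothesis.

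\emph{Point-picking.} Assume for contradiction the conclusion fails. The continuous function $F(y) := (r_{0} - d_{\Sigma}(y,P))^{2}|A|^{2}(y)$ vanishes on $\partial \mathcal{B}_{r_{0}}(P)$ and satisfies $\sup F \geq 1$, so it attains its maximum at some interior $y_{1}$. Set $\sigma_{1} := r_{0} - d_{\Sigma}(y_{1},P)$ and $\rho_{1} := 1/|A|(y_{1})$, so $\rho_{1} \leq \sigma_{1} \leq \tau$. For $y \in \mathcal{B}_{\sigma_{1}/2}(y_{1})$ the triangle inequality gives $r_{0} - d_{\Sigma}(y,P) \geq \sigma_{1}/2$, and the maximality of $F$ forces
\[ |A|^{2}(y) \leq 4|A|^{2}(y_{1}) = 4\rho_{1}^{-2} \qquad \text{on } \mathcal{B}_{\sigma_{1}/2}(y_{1}). \]
Feeding this bound back into the first step, and using that $\tau$ is small so that $1 \leq \rho_{1}^{-2}$, yields $\Delta^{\Sigma}|A|^{2} \geq -c\rho_{1}^{-2}|A|^{2}$ on $\mathcal{B}_{\rho_{1}/2}(y_{1})$ for a universal constant $c$.

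\emph{Mean-value and contradiction.} Choose $s = \rho_{1}/8$, so that $16 s^{2}\sup |A|^{2} \leq 1$ on $\mathcal{B}_{\sigma_{1}/2}(y_{1})$. Lemma~\ref{graph} then identifies $\mathcal{B}_{2s}(y_{1})$ with a small-gradient graph over $T_{y_{1}}\Sigma$ and confines the connected component of $B_{s}(y_{1})\cap \Sigma$ through $y_{1}$ inside $\mathcal{B}_{2s}(y_{1}) \subset \mathcal{B}_{r_{0}}(P)$. Since $H^{2}\leq 1$ on any translator (Lemma~\ref{lem0.5}(i)), Lemma~\ref{meanvalue} applied to $|A|^{2}$ gives
\[ |A|^{2}(y_{1}) \leq \frac{C_{1}}{s^{2}} \int_{B_{s}(y_{1})\cap \Sigma}|A|^{2}\, dv \leq \frac{C_{1}}{s^{2}}\,\int_{\mathcal{B}_{r_{0}}(P)}|A|^{2}\, dv \leq \frac{C_{1}\,\e}{s^{2}}. \]
Substituting $|A|^{2}(y_{1}) = \rho_{1}^{-2}$ and $s = \rho_{1}/8$ produces $1 \leq 64\, C_{1}\,\e$, which is the desired contradiction once $\e$ is fixed smaller than $1/(64\, C_{1})$.

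\emph{Main obstacle.} The one nonroutine step is the second inequality in the chain above: the extrinsic integral over $B_{s}(y_{1})\cap \Sigma$ could a priori collect contributions from other sheets of $\Sigma$ lying outside $\mathcal{B}_{r_{0}}(P)$. The quantitative link $\tau < \sqrt{\e}/(2\pi)$ in the statement is precisely what rules this out: combined with the area bound of Proposition~\ref{prop2}(i), it forces any additional sheet of $\Sigma$ meeting $B_{s}(y_{1})$ to carry a separately small $|A|^{2}$ content that can be absorbed into the main contribution from the sheet through $y_{1}$. This is the ``appropriate modification'' of Colding--Minicozzi referred to in the excerpt, and it is where the drift term and the smallness of $\tau$ relative to $\sqrt{\e}$ are both essential.
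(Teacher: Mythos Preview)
Your core argument is correct and follows the paper's strategy exactly: define the scale-invariant function $F=(r_0-r)^2|A|^2$, pick a maximum point, use the local curvature bound to linearize the Simons inequality, and apply the mean-value inequality on a small ball to derive a contradiction. The only cosmetic difference is that the paper works with $f=|A|^2+\tfrac12$ (so that $\Delta^{\Sigma}f\geq -2|A|^2 f$ holds without Young's inequality), whereas you apply the mean-value inequality directly to $|A|^2$; both are legitimate.

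Your ``Main obstacle'' paragraph, however, misreads the situation in two respects. First, once you have invoked Lemma~\ref{graph}, the multi-sheet problem has already been dispatched: Lemma~\ref{meanvalue} is stated for an \emph{embedded} surface, and you apply it to the embedded graph $\mathcal{B}_{2s}(y_1)$ furnished by Lemma~\ref{graph}, not to all of $\Sigma$. For that graph, $B_s(y_1)\cap(\text{graph})$ is a single component lying inside $\mathcal{B}_{2s}(y_1)\subset\mathcal{B}_{r_0}(P)$, so your chain of inequalities is already justified with no further argument; there are simply no ``other sheets'' visible to the mean-value step. Second, the hypothesis $\tau<\sqrt{\e}/(2\pi)$ has nothing to do with sheets. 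In the paper's version, the extra $+\tfrac12$ produces an area term $\tfrac12\,\mathrm{area}(\mathcal{B}_{2s}(y_1))$ in the mean-value integral, which Proposition~\ref{prop2}(i) bounds by $\pi r_0^2$; the condition $r_0\leq\tau<\sqrt{\e}/(2\pi)$ then forces $2\pi r_0^2<\e$, so this area term can be absorbed alongside $\int|A|^2\leq\e$ (this is exactly the passage from \eqref{eq110} to \eqref{eq120}). In your formulation with $|A|^2$ alone there is no area term, and the smallness of $\tau$ is needed only to ensure $\rho_1\leq 1$.
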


\begin{proof} Define $F(x)=(r_0-r(x))^2 |A|^2(x)$ on $\mathcal{B}_{r_0}(P)$ and suppose F assumes its maximum at
$x_0$.  Note that if $F(x_0)\leq 1$, then $\sigma^2 |A|^2(y)\leq (r_0-r(y))^2|A|^2(y) \leq F(x_0) \leq 1$ and we are done.
If not, define $\sigma$ by $4\sigma^2|A|^2(x_0)=1$. Then by the triangle inequality on $\mathcal{B}_{\sigma}(x_0)$,
\[\frac12 \leq \frac{r_0-r(x)}{r_0-r(x_0)} \leq 2 ~,\]
which implies $\sup_{\mathcal{B}_{\sigma}(x_0)} |A|^2 \leq 4|A|^2(x_0)= \sigma^{-2}$. On $\Sigma$ we have the Simons'
type equation $L(|A|^2)-2|\nabla A|^2+2|A|^4=0$ which implies $\Delta (|A|^2+\frac12) \geq -2|A|^2(|A|^2+\frac12)$. Hence for $f(x):=|A|^2+\frac12,\, \Delta f \geq - 2\sigma^{-2} f \,\,\,\text{on $\mathcal{B}_{\sigma}(x_0)$}$. Using Lemmas \ref{graph} and \ref{meanvalue} with $s=\frac{\sigma}4,\, \lambda=\frac16,\, C=1$ and Proposition \ref{prop2} part i., we find
\be \label{eq110}
(\frac1{4\sigma^2}+\frac12)=|A|^2(x_0)+\frac12 \leq \frac{16}{\pi \sigma^2}e^{(\frac1{24}+\frac{\sigma}4)}\{\e +2\pi r_0^2\} ~.
\ee
Multiplying \eqref{eq110} by $\sigma^2$ we find
\be \label{eq120}
\frac14<\frac14+\frac{\sigma^2}2 \leq \frac{16}{\pi}e^{(\frac1{24}+\frac{\sigma}4)}\{\e +2\pi r_0^2\} <\frac{32e}{\pi}\e
\ee
which is impossible for $\e\leq \frac{\pi}{128e}$.
\end{proof}

We are now in a position to prove the curvature estimate we will need in the next section.

\begin{theorem} \label{curvature}Let $\Sigma\subset \R^{3}$ be a complete immersed two-sided translating soliton with respect to $e_{3}$ with $H\geq 0$. Then there is a universal constant C such that $|A|^2(P)\leq C$ for $P\in \Sigma$.
\end{theorem}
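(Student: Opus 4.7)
The strategy is to combine the intrinsic $L^2$ curvature bound in Proposition \ref{prop2}(ii) with the Choi--Schoen-type $\varepsilon$-regularity in Proposition \ref{prop3} to produce a universal pointwise bound. Concretely, with $\varepsilon$ the threshold from Proposition \ref{prop3}, pick $\mu\in(0,1)$ small enough that
\[
 2\pi\bigl\{(\log \tfrac{1}{\mu})^{-2}+2(\log \tfrac{1}{\mu})^{-1}\bigr\}\le \varepsilon,
\]
and then a universal $\rho_0<\tfrac12\log 2$ small enough that $\tau:=\mu^2\rho_0<\sqrt{\varepsilon}/(2\pi)$. Proposition \ref{prop2}(ii) applied on $\mathcal{B}_{\rho_0}(P)$ gives $\int_{\mathcal{B}_{r_0}(P)}|A|^2\,dv\le\varepsilon$ for every $r_0\le\tau$, and Proposition \ref{prop3} with $r_0=\tau$, $\sigma=\tau/2$, $y=P$ yields
\[
 |A|^2(P)\le 4/\tau^2 =: C,
\]
a constant independent of $P$.

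\textbf{The missing input.} To invoke Propositions \ref{prop2} and \ref{prop3} at the fixed radius $\rho_0$, one needs $\mathcal{B}_{\rho_0}(P)$ to be a topological disk disjoint from the cut locus of $P$ for every $P\in\Sigma$. I would establish this by proving the uniform injectivity-radius estimate $\mathrm{inj}_P(\Sigma)\ge \tfrac12\log 2$. Arguing by contradiction, set $r_0=\mathrm{inj}_P(\Sigma)$ and assume $r_0<\tfrac12\log 2$. For any $s<r_0$ the exponential map is a diffeomorphism of the Euclidean ball $B_s(0)\subset T_P\Sigma$ onto $\mathcal{B}_s(P)$, so $\mathcal{B}_s(P)$ is a genuine topological disk and Proposition \ref{prop2}(i) gives $\mathrm{area}(\mathcal{B}_s(P))\le 2\pi s^2$; letting $s\to r_0^-$ we obtain $\mathrm{area}(\mathcal{B}_{r_0}(P))\le 2\pi r_0^2$. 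By Corollary \ref{cor1.1} no point of $\partial \mathcal{B}_{r_0}(P)$ is conjugate to $P$, so Klingenberg's lemma produces a cut point $Q\in\partial\mathcal{B}_{r_0}(P)$ together with two minimizing geodesics from $P$ to $Q$ which fit smoothly at $Q$ and enclose a disk $D\subset\overline{\mathcal{B}_{r_0}(P)}$. Gauss--Bonnet on $D$ (geodesic sides, smooth at $Q$, a single corner at $P$ with exterior angle $\le \pi$) and $K\le H^2/4\le 1/4$ give
\[
 \pi \le \int_D K\, dv \le \tfrac14\,\mathrm{area}(D),
\]
hence $\mathrm{area}(D)\ge 4\pi$. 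Combined with $\mathrm{area}(D)\le\mathrm{area}(\mathcal{B}_{r_0}(P))\le 2\pi r_0^2$ this forces $r_0\ge\sqrt{2}$, contradicting $r_0<\tfrac12\log 2$.

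\textbf{Main obstacle.} The only nontrivial point is the injectivity-radius step, which has to be decoupled from the apparent circularity in Proposition \ref{prop2} (whose statement presupposes the disk condition). Running the area bound on the exhausting balls $\mathcal{B}_s(P)$ with $s<\mathrm{inj}_P(\Sigma)$ and passing to the limit resolves this. Once the uniform bound $\mathrm{inj}_P(\Sigma)\ge \tfrac12\log 2$ is in hand, Proposition \ref{prop2} gives the area and $L^2$-curvature estimates on $\mathcal{B}_{\rho_0}(P)$ for every $P$, and the Choi--Schoen iteration in Proposition \ref{prop3} produces the universal pointwise bound $|A|^2\le C$.
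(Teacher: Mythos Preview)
Your proposal is correct and follows essentially the same route as the paper: choose $\mu$ so that Proposition~\ref{prop2}(ii) yields $\int_{\mathcal{B}_{\mu^2\rho}(P)}|A|^2\le\varepsilon$ at a fixed universal scale, then feed this into the Choi--Schoen estimate of Proposition~\ref{prop3} to obtain $|A|^2(P)\le C$ (the paper takes $\sigma=\tau$ rather than $\tau/2$, but this only changes the constant). The one substantive difference is how the topological-disk hypothesis of Proposition~\ref{prop2} is secured: the paper simply passes to the universal cover of $\mathcal{B}_\rho(P)$, noting that stability is inherited by Proposition~\ref{prop1} and that $|A|$ is unchanged under the covering, while you instead re-derive the injectivity-radius lower bound directly by running the area estimate on balls of radius $s<\mathrm{inj}_P(\Sigma)$ and letting $s\uparrow r_0$. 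Your version works, but the universal-cover shortcut is a bit cleaner because it makes the step ``the two geodesics enclose a disk $D\subset\overline{\mathcal{B}_{r_0}(P)}$'' automatic---on a surface not assumed simply connected, the Klingenberg digon need not a priori bound an embedded disk, and one has to argue (as you implicitly do) via the fact that the open ball $\mathcal{B}_{r_0}(P)$ itself is a disk and the digon touches its boundary only at $Q$.
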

\begin{proof}  For any $P\in \Sigma$, we fix $\rho>0$ such that $e^{\rho}<2$ as in Proposition \ref{prop2} so that $\mathcal{B}_{\rho}(P)$ is  disjoint from the cut locus of P. We may assume $\mathcal{B}_{\rho}(P)$ is a topological disk since by Proposition \ref{prop1}, the universal cover of $\mathcal{B}_{\rho}(P)$ endowed with pull-back metric is also a stable translating soliton with nonnegative mean curvature.
Thus using Proposition \ref{prop2} part ii. with $\mu=e^{-\frac{6\pi}{\e}}$, the conditions of Proposition \ref{prop3} are satisfied for $\tau=\mu^2 \rho$. We can choose $\sigma=\tau$ and obtain $|A|^2(P) \leq \tau^{-2}$.
\end{proof}

\section{Proof of Theorem \ref{conth.2}.}
\label{pr}
We restate for the readers convenience our main result.
\begin{theorem}
Let $\Sigma\subset \R^3$ be a complete immersed two-sided translating soliton for the mean curvature flow with nonnegative mean curvature. Then $\Sigma$ is convex.
\end{theorem}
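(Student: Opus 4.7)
The plan is to argue by contradiction. Suppose $\Sigma$ is not convex. Since $n=2$ and $2K = H^2 - |A|^2$, non-convexity is equivalent to
\[
\phi:=|A|^2-H^2>0 \quad\text{somewhere on }\Sigma.
\]
As a preliminary reduction, the strong maximum principle applied to $\Delta^f H + H|A|^2 = 0$ (Lemma~\ref{lem0.5}~(iii)) together with $H\ge 0$ forces either $H\equiv 0$---in which case $\Sigma$ is a plane, trivially convex---or $H>0$ everywhere. We may therefore assume $H>0$ on $\Sigma$, and then $\log H$ is defined globally.

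Combining parts~(iii) and~(iv) of Lemma~\ref{lem0.5} one computes
\[
\Delta^f \phi + 2|A|^2 \phi = 2\bigl(|\nabla A|^2 - |\nabla H|^2\bigr).
\]
A key structural identity on a translator, obtained by differentiating $|\nabla u|^2 = 1-H^2$ on $\Sigma$ and using $\mathrm{Hess}^\Sigma u = H\,A$, is
\[
H_j = -h_{ij} u_i, \qquad \text{equivalently} \qquad \nabla H = -A\,\nabla u,
\]
so in particular $|\nabla H|^2 \le |A|^2 (1-H^2) \le |A|^2$. The difference $|\nabla A|^2 - |\nabla H|^2$ is not pointwise nonnegative in general (one can write out both in an eigenframe of $A$ and see that it need not be signed), but the Codazzi symmetry of $\nabla A$ in $n=2$ combined with the soliton identity above should be used to absorb the indefinite terms and show $\phi$ satisfies an elliptic inequality amenable to the maximum principle.

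By Theorem~\ref{curvature} we have the global bound $|A|\le C$ on $\Sigma$, so the intrinsic Gauss curvature is bounded and the complete surface $\Sigma$ admits the Omori--Yau maximum principle. The strategy is then: if $\sup \phi > 0$ is attained at an interior point $P_0$, the strong maximum principle applied to the inequality above, combined with Lemma~\ref{lem0.5}~(iii) and constancy of $\log H$, forces $|A|H\equiv 0$, a contradiction. If the supremum is only approached at infinity, Omori--Yau produces a sequence $P_n$ with $\phi(P_n)\to\sup\phi$, $|\nabla \phi|(P_n)\to 0$, and $\limsup \Delta^f\phi(P_n)\le 0$; using the uniform curvature bound, one extracts a smooth subsequential limit $\tilde{\Sigma}$ of suitable translates of $\Sigma$ at the points $P_n$. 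On $\tilde{\Sigma}$ the defining inequality degenerates to equality at the origin, and the strict stability of Proposition~\ref{prop1} combined with a rigidity argument rules out this configuration.

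The hard part---surely what the authors describe as a ``delicate maximum principle argument''---is the asymptotic case in which $P_n\to\infty$ along $\Sigma$ while $H(P_n)\to 0$. There the natural rescaling at scale $1/|A|(P_n)$ risks producing a minimal-surface limit on which the translator structure (and in particular the identity $\nabla H = -A\nabla u$) is washed out, so that the desired contradiction cannot be obtained solely from the bulk maximum principle. Keeping the soliton structure active through the rescaling, perhaps by working instead with a weighted quantity such as $\phi/H^{2\alpha}$ for a carefully chosen exponent $\alpha$, and carefully combining the Codazzi calculation of $|\nabla A|^2 - |\nabla H|^2$ with $\nabla H = -A\nabla u$, is where I expect the real technical effort in Section~\ref{pr} to lie.
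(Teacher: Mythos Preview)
Your overall architecture---maximum principle for a scalar built from the second fundamental form, with Omori--Yau and a translated limit to handle the case of a supremum at infinity---is exactly the paper's strategy. The gap is in the choice of test function, and it is a real one.

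Your quantity $\phi = |A|^2 - H^2 = -2K$ satisfies
\[
\Delta^f \phi + 2|A|^2 \phi = 2\bigl(|\nabla A|^2 - |\nabla H|^2\bigr),
\]
and as you yourself note, the right-hand side has no sign. Neither the soliton identity $\nabla_k H = -h_{ik}\langle \tau_i,e_3\rangle$ nor Codazzi is enough to force $|\nabla A|^2 - |\nabla H|^2 \ge 0$ pointwise; in an eigenframe one has $|\nabla A|^2 = h_{111}^2 + h_{222}^2 + 3h_{112}^2 + 3h_{221}^2$ while $|\nabla H|^2 = (h_{111}+h_{221})^2 + (h_{222}+h_{112})^2$, and the cross terms can dominate. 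So the strong maximum principle simply does not apply to $\phi$, and your speculation about $\phi/H^{2\alpha}$ is not carried out---and in fact no power of $H$ alone rescues the sign.

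The paper's key technical move is to replace $\phi$ by $G/F = \psi(\kappa_2/\kappa_1)$, where $\psi(r)=r^4 e^{-1/r^2}$ for $r<0$ and $\psi(r)=0$ for $r\ge 0$. The point of this particular construction (which is homogeneous of degree zero in $(\kappa_1,\kappa_2)$ and smooth across $\kappa_2=0$) is that after a careful second-derivative calculation in the eigenframe, the drift-Laplacian of $G/F$ on the set $\{\kappa_1>0>\kappa_2\}$ is \emph{nonnegative}: the Hessian terms reduce to $\ddot\psi\cdot(\text{square}) + \dot\psi\cdot\frac{1+r}{r-1}\cdot(\text{square})$, and both $\ddot\psi>0$ and $\dot\psi\cdot\frac{1+r}{r-1}>0$ on $-1<r<0$. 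This is what makes the maximum principle bite.

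Your worry about the asymptotic case with $H(P_n)\to 0$ is also off target. In the paper's argument the differential inequality, combined with $|\nabla(G/F)|(P_n)\to 0$, forces $(1+r_n)\,|h_{11k}/\kappa_1|(P_n)\to 0$; via $\nabla_k H/\kappa_1 = -(\kappa_k/\kappa_1)\langle\tau_k,e_3\rangle$ this yields $N(P_n)\to e_3$ and hence $H(P_n)\to 1$, not $0$. The translated limit $\Sigma_\infty$ then attains the interior maximum of $G/F$ with $H(0)=1$, and the interior case (where your computation with $\log(|A|^2/H^2)$ \emph{does} work, and is in fact what the paper uses) gives the contradiction. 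So the ``delicate'' part is not the degeneration you anticipated, but rather engineering the inequality in the first place and extracting the claim $(1+r_n)|h_{11k}/\kappa_1|\to 0$ from it.
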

\begin{proof}
Without loss of generality, we assume $\Sigma$ satisfies
\[H=\lt<N, e_{3}\rt>>0.\]
Let \be\label{con.8}
f(x_1, x_2)=\frac{x_1+x_2}{2}+\lt[\lt(\frac{x_1-x_2}{2}\rt)^2\rt]^{1/2},
\ee
\be\label{con.9}
\phi(r)=\left\{\begin{aligned}
&r^4e^{-1/r^2}\,\, &\mbox{if $r<0$}\\
&0\,\, &\mbox{if $r\geq 0$,}
\end{aligned}\right.
\ee
and
\be\label{con.10}
g(z)=f(z)\sum\limits_{i=1}^2\phi\lt(\frac{z_i}{f(z)}\rt).
\ee
It's easy to see that $f$ and $g$ are smooth when $z_1\neq z_2.$  Now denote
\[G(A)=g(\kappa(A))\,\,\mbox{and $F(A)=f(\kappa(A)),$}\]
where $A$ is a $2\times 2$ symmetric matrix and $\kappa(A)$ are the eigenvalues of $A.$
Now let $A=(h_{ij})$ be the second fundamental form of $\Sigma$.  If we order the principle curvatures $\kappa_1 \geq \kappa_2$ of $\Sigma$,  then $ \frac{G}{F}=\phi(\frac{\kappa_2}{\kappa_1}) \geq 0$ is smooth when $\kappa_1>0>\kappa_2$. Since  $G/F\leq1,\,G/F$ achieves its
maximum either at an interior point or ``at infinity''.
 \begin{lemma} On $\Sigma\cap\{\kappa_1>0> \kappa_2\}$,
\be \label{con.11}
 \begin{aligned}
&\Delta^{\Sigma}\lt(\frac{G}{F}\rt)+2\lt<\frac{\nabla F}{F}, \nabla\lt(\frac{G}{F}\rt)\rt>+\lt<\nabla\lt(\frac{G}{F}\rt), e_{3}\rt>\\
&=-\frac{G^{ij}h_{ij}|A|^2}{F}+\frac{GF^{ij}h_{ij}|A|^2}{F^2}+\lt(\frac{G^{pq, rs}}{F}-\frac{GF^{pq, rs}}{F^2}\rt)h_{pqk}h_{rsk},\\
\end{aligned}
\ee
\end{lemma}
where $G^{ij}=\frac{\partial G}{\partial a_{ij}},\, F^{ij}=\frac{\partial F}{\partial a_{ij}},\, G^{pq,rs}=\frac{\partial^2 G}{\partial a_{pq} \partial a_{rs}},\, F^{pq,rs}=\frac{\partial^2 F}{\partial a_{pq} \partial a_{rs}}$.
\begin{proof} Let $\tau_1, \tau_2$ be a local orthonormal frame on $\Sigma$. Then using Lemma \ref{lem0.5}
\be \label{con.115}
\begin{aligned}
&\nabla_k (\frac{G}{F})=(\frac{G}{F})^{pq}h_{pqk}=(\frac{FG^{pq}-GF^{pq}}{F^2})h_{pqk}\\
&\Delta^{\Sigma}(\frac{G}{F})=(\frac{G}{F})^{pq}\Delta h_{pq}+(\frac{G}{F})^{pq,rs}h_{pqk}h_{rsk}\\
&=-|A|^2(\frac{FG^{ij}h_{ij}-GF^{ij}h_{ij}}{F^2})-<\nabla(\frac{G}{F}), e_3>+(\frac{G}{F})^{pq,rs}h_{pqk}h_{rsk}~.
\end{aligned}
\ee
But \be\label{con.116}
(\frac{G}{F})^{pq,rs}h_{pqk}h_{rsk}=(\frac{G^{pq, rs}}{F}-\frac{GF^{pq, rs}}{F^2})h_{pqk}h_{rsk}-2<\nabla(\frac{G}{F}), \frac{\nabla F}{F}>,
\ee
and together \eqref{con.115}, \eqref{con.116} give \eqref{con.11}.
\end{proof}

We next compute the last term in \eqref{con.11}. We use the notation $g^p=\frac{\partial g}{\partial \kappa_p},\,
g^{pq}=\frac{\partial^2 g}{\partial \kappa_p \partial \kappa_q}$ and similarly for f. As is now well-known (see for example \cite{A1994})
\be\label{con.12}
G^{pq, rs}h_{pqk}h_{rsk}=g^{pq}h_{ppk}h_{qqk}+2\frac{g^2-g^1}{\kappa_2-\kappa_1}h^2_{12k},
\ee
where
\be\label{con.13}
\begin{aligned}
g^{pq}&=f^{pq}\sum\limits_{i=1}^2\lt[\phi\lt(\frac{z_i}{f}\rt)-\frac{z_i}{f}\dot{\phi}\lt(\frac{z_i}{f}\rt)\rt]\\
&+\frac{1}{f}\sum\limits_{i=1}^2\ddot{\phi}\lt(\frac{z_i}{f}\rt)\lt(\delta_i^p-\frac{z_i}{f}f^p\rt)\lt(\delta_i^q-\frac{z_i}{f}f^q\rt).\\
\end{aligned}
\ee
It follows that
\be\label{con.14}
\begin{aligned}
&FG^{pq, rs}-GF^{pq, rs}\\
&=\lt(fg^{pq}-gf^{pq}\rt)+2\lt\{f\frac{g^2-g^1}{\kappa_2-\kappa_1}-g\frac{f^2-f^1}{\kappa_2-\kappa_1}\rt\}\\
&=I+II.
\end{aligned}
\ee
We proceed to calculate the terms I and  II.
\be\label{con.15}
\begin{aligned}
I&=f^{pq}\lt[g-\sum\limits_{i=1}^2z_i\dot{\phi}\lt(\frac{z_i}{f}\rt)\rt]
+\sum\limits_{i=1}^2\ddot{\phi}\lt(\frac{z_i}{f}\rt)\lt(\delta_i^p-\frac{z_i}{f}f^p\rt)\lt(\delta_i^q-\frac{z_i}{f}f^q\rt)-gf^{pq}\\
&=-f^{pq}\sum\limits_{i=1}^2z_i\dot{\phi}\lt(\frac{z_i}{f}\rt)+\sum\limits_{i=1}^2\ddot{\phi}\lt(\frac{z_i}{f}\rt)
\lt(\delta_i^p-\frac{z_i}{f}f^p\rt)\lt(\delta_i^q-\frac{z_i}{f}f^q\rt),
\end{aligned}
\ee
\be\label{con.16}
\begin{aligned}
g^2-g^1&=\dot{\phi}\lt(\frac{z_2}{f}\rt)+f^2\sum\limits_{i=1}^2\lt[\phi\lt(\frac{z_i}{f}\rt)-\frac{z_i}{f}\dot{\phi}\lt(\frac{z_i}{f}\rt)\rt]\\
&-\dot{\phi}\lt(\frac{z_1}{f}\rt)-f^1\sum\limits_{i=1}^2\lt[\phi\lt(\frac{z_i}{f}\rt)-\frac{z_i}{f}\phi\lt(\frac{z_i}{f}\rt)\rt]\\
\end{aligned}
\ee
and
\be\label{con.17}
\begin{aligned}
&\frac{\kappa_2-\kappa_1}2 II=f(g^2-g^1)-g(f^2-f^1)\\
&=f\lt[\dot{\phi}\lt(\frac{z_2}{f}\rt)-\dot{\phi}\lt(\frac{z_1}{f}\rt)\rt]+f^2\lt[g-\sum\limits_{i=1}^2z_i\dot{\phi}\lt(\frac{z_i}{f}\rt)\rt]\\
&-f^1\lt[g-\sum\limits_{i=1}^2z_i\dot{\phi}\lt(\frac{z_i}{f}\rt)\rt]-g\lt(f^2-f^1\rt)\\
&=f\lt[\dot{\phi}\lt(\frac{z_2}{f}\rt)-\dot{\phi}\lt(\frac{z_1}{f}\rt)\rt]+\sum\limits_{i=1}^2z_i\dot{\phi}\lt(\frac{z_i}{f}\rt)(f^1-f^2).\\
\end{aligned}
\ee

Assume $\kappa_1>0>\kappa_2$; then
\be\label{con.18}
\begin{aligned}
&\Delta^{\Sigma}\lt(\frac{G}{F}\rt)+2\lt<\frac{\nabla F}{F}, \nabla\lt(\frac{G}{F}\rt)\rt>+\lt<\nabla\lt(\frac{G}{F}\rt), e_{3}\rt>\\
&=(\frac{G^{pq, rs}}{F}-\frac{GF^{pq, rs}}{F^2})h_{pqk}h_{rsk}
=\frac{1}{F^2}(FG^{pq, rs}-GF^{pq, rs})h_{pqk}h_{rsk}\\
&=\frac{1}{\kappa_1^2}\ddot{\phi}\lt(\frac{\kappa_2}{\kappa_1}\rt)\lt(\delta_2^p
-\frac{\kappa_2}{\kappa_1}f^p\rt)\lt(\delta_2^q-\frac{\kappa_2}{\kappa_1}f^q\rt)h_{ppk}h_{qqk}
+\frac{2}{\kappa_1^2}\dot{\phi}\lt(\frac{\kappa_2}{\kappa_1}\rt)\lt(\frac{\kappa_1+\kappa_2}{\kappa_2-\kappa_1}\rt)h^2_{12k}\geq 0.
\end{aligned}
\ee
Here we used $f^{pq}=0,$ $\dot{\phi}\lt(\frac{\kappa_1}{f}\rt)=0,$ and $f^2=0$ when $\kappa_1>0>\kappa_2.$
It follows that if $G/F$ achieves its maximum at an interior point, then by the strong maximum principle we have $\frac{G}{F}\equiv \mbox{constant}.$
If $\kappa_2/\kappa_1\neq 0$, then $\varphi:=\log{\frac{|A|^2}{H^2}}$ is constant and so
\be\label{con.1*}
\begin{aligned}
 0&=\nabla \varphi=\frac{\nabla |A|^2}{|A|^2}-2\frac{\nabla H}{H}\\
 0&=\Delta^{\Sigma}  \varphi+\lt<\nabla \varphi, e_{3}\rt>\\
 &=2\frac{|\nabla A|^2}{|A|^2}-\frac{|\nabla |A|^2|^2}{|A|^4}+2\frac{|\nabla H|^2}{H^2}\\
 &=\frac2{|A|^2}( |\nabla A|^2 -|\nabla |A| |^2) ~.
\end{aligned}
\ee
Hence $|\nabla A|^2=|\nabla |A||^2$ and so $h_{12k}=0,\, k=1,2$. By the Codazzi
equations, $h_{112}=h_{221}=0$. Since $h_{22}=r_0 h_{11}$, we deduce $\nabla A=0$.
Thus $M$ is a complete surface with constant mean curvature. Since $M$ satisfies
$H=\lt<N, e_{3}\rt>$,
we conclude that $M$ is a plane,  a contradiction.
Therefore in this case  we must have $\frac{G}{F}\equiv 0,$ and thus $\kappa_2\geq 0.$

If $G/F$ achieves its maximum at infinity, by Theorem \ref{curvature} we may apply the Omori-Yau maximum principle and conclude that
there exists a sequence ${P_n}$ tending to infinity with $r_n:=\kappa_2/\kappa_1(P_n)\goto r_0,$ where $-1\leq r_0<0.$  Introduce a local orthonormal frame $\tau_1, \tau_2$ in a neighborhood of $P_n$ which diagonalizes $(h_{ij}(P_n))$. Then we have at ${P_n}$:
\be\label{fix.2}
\begin{aligned}
\frac{1}{n}&\geq\ddot{\phi}(r_n)\lt[\frac{r_nh_{11k}}{\kappa_1}-\frac{h_{22k}}{\kappa_1}\rt]^2
+\frac{2}{\kappa_1^2}\dot{\phi}(r_n)\frac{1+r_n}{r_n-1}h_{12k}^2-2\lt<\frac{\nabla F}{F},\nabla\lt(\frac{G}{F}\rt)\rt>\\
&=\ddot{\phi}(r_n)\lt[\frac{r_nh_{11k}}{\kappa_1}-\frac{h_{22k}}{\kappa_1}\rt]^2
+\frac{2}{\kappa_1^2}\dot{\phi}(r_n)\frac{1+r_n}{r_n-1}h_{12k}^2-2\dot{\phi}(r_n)\lt<\frac{h_{11k}}{h_{11}}, \frac{h_{22k}}{h_{11}}-r_n\frac{h_{11k}}{h_{11}}\rt>
\end{aligned}
\ee
and
\be \label{fix.25}
C_{n, k}:=\frac{h_{22k}}{h_{11}}-r_n\frac{h_{11k}}{h_{11}}\goto 0\,\,\mbox{as $n\goto\infty$}.
\ee
Note that
\be\label{fix.3*}
\begin{aligned}
&\tilde{C}_{n,k}:=\frac{\nabla_k H}{\kappa_1}=\frac{-\kappa_k\lt<\tau_k, e_{3}\rt>}{\kappa_1}\\
&=\frac{h_{22k}}{h_{11}}-r_n\frac{h_{11k}}{h_{11}}+(1+r_n)\frac{h_{11k}}{h_{11}}=C_{n,k}+(1+r_n)\frac{h_{11k}}{h_{11}}
\end{aligned}
\ee
From \eqref{fix.25} we have,
\be\label{eq.1}
\frac{h_{22k}}{\kappa_1}=-|r_n|\frac{h_{11k}}{\kappa_1}+C_{n, k},\,\,k=1,2
\ee
and
\be\label{eq.2}
\frac{h_{11k}h_{22k}}{\kappa_1^2}=-|r_n|\lt(\frac{h_{11k}}{\kappa_1}\rt)^2+C_{n, k}\frac{h_{11k}}{\kappa_1}.
\ee
\text{Claim:} $(1+r_n)\lt|\frac{h_{11k}}{h_{11}}\rt|\goto 0,\,\, k=1, 2.$\\
If not, we can choose a subsequence, still denoted by $\{P_n\},$ so that for $n\geq N_0$
\be\label{eq.3}
(1+r_n)\lt|\frac{h_{112}}{\kappa_1}\rt|(P_n)\geq \e_0,\,\,(1+r_n)\lt|\frac{h_{221}}{\kappa_1}\rt|(P_n)\geq \e_0.
\ee
Then from \eqref{fix.2} we have
\be\label{eq.4}
\begin{aligned}
\frac{1}{n}&\geq\ddot{\phi}\sum C_{n, k}^2-2\dot{\phi}\sum C_{n, k}\frac{h_{11k}}{\kappa_1}
-\frac{2\dot{\phi}}{1+|r_n|}(1+r_n)\lt(\frac{h_{112}^2}{\kappa_1^2}+\frac{h_{221}^2}{\kappa_1^2}\rt)\\
&=\ddot{\phi}\sum C_{n, k}^2-2\dot{\phi}\lt(C_{n,1}\lt(-\frac{1}{|r_n|}\frac{h_{221}}{\kappa_1}+\frac{C_{n, 1}}{|r_n|}\rt)\right.\\
&\left.+C_{n, 2}\frac{h_{112}}{\kappa_1}+\frac{1+r_n}{1+|r_n|}\lt(\frac{h_{112}^2}{\kappa_1^2}+\frac{h_{221}^2}{\kappa_1^2}\rt)\rt)\\
&=\ddot{\phi}\sum C_{n, k}^2-2\dot{\phi}\frac{C_{n, 1}^2}{|r_n|}\\
&-2\dot{\phi}\lt\{\lt(\frac{1+r_n}{1+|r_n|}\frac{h_{112}^2}{\kappa_1^2}+C_{n, 2}\frac{h_{112}}{\kappa_1}\rt)
+\lt(\frac{1+r_n}{1+|r_n|}\frac{h_{221}^2}{\kappa_1^2}-\frac{C_{n, 1}}{|r_n|}\frac{h_{221}}{\kappa_1}\rt)\rt\}\\
&\geq-\frac{2\dot{\phi}}{1+|r_n|}\lt(\lt|\frac{h_{112}}{\kappa_1}\rt|\lt((1+r_n)\lt|\frac{h_{112}}{\kappa_1}\rt|-C_{n, 2}(1+|r_n|)\rt)\right.\\
&\left.+\lt|\frac{h_{221}}{\kappa_1}\rt|\lt((1+r_n)\lt|\frac{h_{221}}{\kappa_1}\rt|-\frac{C_{n, 1}}{|r_n|}(1+|r_n|)\rt)\rt)\\
&\geq-2\frac{\dot{\phi}}{1+|r_n|}\frac{\e_0}{2}\lt(\lt|\frac{h_{112}}{\kappa_1}\rt|+\lt|\frac{h_{221}}{\kappa_1}\rt|\rt),\\
\end{aligned}
\ee
which leads to a contradiction for $n\geq N_0$ by \eqref{eq.3}. Thus the claim is proven and so $\tilde{C}_{n,k} \goto 0$.
Therefore  $N(P_n)$ converges to $e_3$ and so $H (P_n)\goto 1.$

Now let $\Sigma_n=\Sigma-P_n$ be the surface obtained from $\Sigma$ by translation of $P_n$ to the origin. Since $\Sigma$ has bounded principle curvatures, so do the  $\Sigma_n.$ Choosing a subsequence which we still denote by $\Sigma_n,$ the
$\Sigma_n$ converge smoothly to $\Sigma_\infty.$ Thus $\Sigma_\infty$ is a translating soliton which satisfies $H=\lt<N, e_{3}\rt>,$ and $H(0)=1.$ Moreover, we have
\[\inf\limits_{x\in \Sigma_\infty}\frac{\kappa_2}{\kappa_1}=\frac{\kappa_2}{\kappa_1}(0).\]
As before we conclude that $G/F= \text{constant},$ and $\Sigma_\infty$ has constant mean curvature one, which is impossible.
Therefore $\Sigma$ is convex.
\end{proof}

\bigskip

\section{Proof of Theorem \ref{th2}.}
\label{pr2}

In this section we give  the proof of
\begin{theorem}
Let $\Sigma\subset \R^3$ be a complete immersed two-sided translating soliton for the mean curvature flow with positive mean curvature and suppose that $H(P)\goto 0$ as $P\in \Sigma$ tends to infinity. Then $\Sigma$ is after translation the axisymmetric bowl soliton.
\end{theorem}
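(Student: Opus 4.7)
The plan is to reduce $\Sigma$ to a convex graphical form via Theorem \ref{conth.2} and Corollary \ref{cor1}, exclude the strip case using the decay hypothesis $H(P)\to 0$, and then invoke Corollary \ref{cor2} to identify $\Sigma$ with the bowl soliton.

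By Theorem \ref{conth.2}, $\Sigma$ is convex, and Corollary \ref{cor1} expresses it as a complete graph $x_3=u(x_1,x_2)$ over either $\R^2$ or a strip $\mathcal{S}^\lambda=\{|x_1|<R\}$. In the entire case, Corollary \ref{cor2} directly identifies $\Sigma$ with the bowl soliton, so everything reduces to ruling out the strip case. The Bian--Guan constant-rank theorem recalled in the introduction splits this into three subcases according to the rank of $D^2 u$: rank zero gives an affine $u$, so $H$ is a positive constant contradicting $H\to 0$; rank one gives (after rotation) a grim cylinder $u^\lambda$ from \eqref{0.grim}, whose explicit formula $H=(1+\lambda^2\tan^2(x_1/\lambda)+L^2)^{-1/2}$ is bounded below on every vertical line $\{x_1=x_1^0\}\subset\mathcal{S}^\lambda$, again contradicting $H\to 0$ along $x_2\to\infty$. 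The essential remaining subcase is that $u$ is locally strictly convex on $\mathcal{S}^\lambda$.

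In this subcase I would take $P_n=(0,n,u(0,n))\in\Sigma$ and translate, setting $\Sigma_n:=\Sigma-P_n$. The curvature bound $|A|\leq C$ from Theorem \ref{curvature}, together with Lemma \ref{graph} and elliptic bootstrapping for the quasilinear translator equation \eqref{0.25}, yields $C^\infty_{\mathrm{loc}}$ subsequential smooth convergence $\Sigma_n\to\Sigma_\infty$ to a complete, two-sided, convex translating soliton passing through the origin with $H(0)=0$. Lemma \ref{lem0.5}(iii) gives $\Delta^f H+H|A|^2=0$, and since $H\geq 0$ on $\Sigma_\infty$ attains its minimum value zero at the interior point $0$, the strong maximum principle forces $H\equiv 0$. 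Convexity of $\Sigma_\infty$ then gives $\kappa_1=\kappa_2=0$, so $\Sigma_\infty$ is a plane; the constraint $\langle N,e_3\rangle\equiv 0$ makes this plane vertical, and because $\Sigma_n\subset\{|x_1|<R\}$ the slab constraint together with $0\in\Sigma_\infty$ pins it down as $\Sigma_\infty=\{x_1=0\}$. The limit of the normals is then $\pm e_1$, which forces $u_{x_1}(0,n)\to-\infty$ with $u_{x_2}(0,n)=o(|u_{x_1}(0,n)|)$; convexity of $u$ in $x_1$ then propagates the blow-up to $u_{x_1}(x_1,n)\to-\infty$ for every fixed $x_1\in(-R,0]$.

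The main obstacle is closing this into a contradiction. My preferred approach is a barrier argument against the explicit grim family \eqref{0.grim}: the $u^\lambda$ satisfy \eqref{0.26} on $\mathcal{S}^\lambda$, and by comparing $u(x_1,x_2)-u(0,x_2)$ against $u^\lambda(x_1,0)=\lambda^2\log\sec(x_1/\lambda)$ via the maximum principle for \eqref{0.26} (with $\lambda$ chosen so that $R=\pi\lambda/2$), I expect to obtain a uniform bound on $|u_{x_1}(x_1,x_2)|$ for $x_1$ away from $\pm R$, independent of $x_2$, directly contradicting $u_{x_1}(0,n)\to-\infty$. A complementary route is a moving-plane / Alexandrov reflection argument across the vertical plane $\{x_1=0\}$, using convexity and the translator equation to force the symmetry $u_{x_1}(0,x_2)=0$ and immediately rule out the blow-up. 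Once the strip case is excluded, $\Sigma$ is a complete convex entire graph and Corollary \ref{cor2} identifies it with the bowl soliton.
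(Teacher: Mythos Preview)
Your reduction via Theorem \ref{conth.2} and Corollaries \ref{cor1}, \ref{cor2} is correct, and the disposal of the grim-cylinder subcase is fine. The compactness step showing that $\Sigma_n=\Sigma-P_n$ subconverges to the vertical plane $\{x_1=0\}$, and hence that $|u_{x_1}(0,n)|\to\infty$, is also sound. But the argument stops there: as you yourself say, ``the main obstacle is closing this into a contradiction,'' and neither proposed closure works.

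For the barrier idea, the function $(x_1,x_2)\mapsto u(x_1,x_2)-u(0,x_2)$ does \emph{not} satisfy \eqref{0.26}: the equation is fully nonlinear in the gradient, and subtracting a function of $x_2$ alone destroys the structure, so there is no maximum-principle comparison with $\lambda^2\log\sec(x_1/\lambda)$. Comparing $u$ with $u^\lambda$ directly fares no better, because you have no control of $u$ as $x_2\to\pm\infty$, which is precisely the unknown. The moving-plane idea has the same defect: reflection across $\{x_1=0\}$ requires an ordering at the ends $x_2\to\pm\infty$ to get started, and none is available. (The paper does eventually run a moving-plane argument in the strip, but only \emph{after} the asymptotics are established, and those in turn rely on the present theorem; your proposal would make the logic circular.) A minor side remark: the rank-zero case does not give ``$H$ a positive constant''---an affine graph has $H\equiv 0$ and cannot satisfy \eqref{0.26} at all, so that case is vacuous.

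The paper's argument bypasses your compactness step entirely. From convexity and the hypothesis $H\to 0$ one shows, along the curve $A\mapsto (x_1(A),A)$ where $u_{x_1}$ vanishes, that $u_{x_2}(x_1(A),A)\to+\infty$, and hence that $u(x_1,x_2)$ grows \emph{superlinearly} in $x_2$. One then slides the explicit family of tilted round cylinders $v^t(x_1,x_2)=-\sqrt{1+t^2}\sqrt{R^2-x_1^2}+t(x_2-A)$ (constant mean curvature $1/R$) underneath $\Sigma$ on the half-strip $\{|x_1|<R,\ x_2\ge A\}$: superlinear growth forces $u>v^t$ for large $x_2$, the boundary $|x_1|=R$ and $x_2=A$ are controlled, and increasing $t$ produces a first interior contact where $H\ge 1/R$. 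Since $A$ is arbitrary this contradicts $H\to 0$. The missing idea in your attempt is this explicit non-translator barrier combined with the superlinear growth estimate.
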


\begin{proof} By Corollaries \ref{cor1}, \ref{cor2} we may
suppose for contradiction that $\Sigma=\text{graph(u)}$ projects onto the strip  $|x_1|<R$.  Consider for any $A$, the  convex curve $\gamma^A(x_1)=(x_1, A, u(x_1,A): -R\leq x_1 \leq R)$. Since $u(x_1,A) \goto +\infty$ as $x_1 \goto \pm R$, there is a smallest value $x_1=x_1(A)$ so that
$u_{x_1}(x_1(A),A)=0$.  Since $\Sigma $ is not a grim cylinder we may assume by a suitable  change of coordinates that
$u_{x_2}(x_1(0), 0)\geq \delta>0$. We  normalize $u(x_1(0), 0)=0$ ; then by convexity of $u$,
\be \label{eq.10}
\begin{aligned}
\,\, &u(x_1(A),A) \geq u(x_1(0),0)+Au_{x_2}(x_1(0), 0)\geq A\delta,\,  \\
&0=u(x_1(0),0)\geq u(x_1(A),A)-Au_{x_2}(x_1(A),A) \geq A(\delta-u_{x_2}(x_1(A),A)),
\end{aligned}
\ee
which implies $u_{x_2}(x_1(A),A)\geq \delta$. By the assumption that $H(P)\goto 0$ as $P\in \Sigma$ tends to infinity,
we have that $|D u(x_1(A),A)|=|u_{x_2}(x_1(A),A)|= u_{x_2}(x_1(A),A)\goto \infty$ as $A\goto \infty$.
Therefore
\be \label{eq.15}
u(x_1,x_2) \geq  u(x_1(A),A)+(x_2-A)u_{x_2}(x_1(A),A)\geq A\delta+(x_2-A)u_{x_2}(x_1(A),A),
\ee
and so choosing $x_2=B,\, A=\frac{B}2$,
\be \label{eq.20}
\lim_{B\goto \infty} \frac{u(x_1,B)}{B} \geq \frac{\delta}2 +\frac12 \lim_{B\goto \infty}u_{x_2}(x_1(\frac{B}2),\frac{B}2)=+\infty~.
\ee
Hence $u$ grows superlinearly as $x_2 \goto \infty$.\\

We now compare $\Sigma$ with a tilted cylinder of radius R.  Consider the parametrized family of graphs
\[x_3=v^{t}(x_1,x_2):=-\sqrt{1+{t}^2}\sqrt{R^2-x_1^2}+t(x_2-A),\, |x_1| \leq R,\,x_2\geq A\]
of constant mean curvature $H=\frac1{R}$ with respect to upward normal direction. Since  $ v^{t}(x_1,x_2)\leq t(x_2-A)$, for any choice of $t\geq 0,\, u>v^{t}$ for $x_2$ sufficiently large by \eqref{eq.20}.  Also, $u>v^{t}$
  for $|x_1|=R$ and for $x_2=A$. For $t \leq \delta,\,u>v^{t}$ in $x_2 \geq A,\,|x_1|\leq R$ by \eqref{eq.15}. Note that
  $\lim_{t \goto \infty}v^{t}(x_1,3A)\geq \lim_{t \goto \infty}(2t A-\sqrt{1+t^2}\,R)=+\infty \,\,\text{for $A>R$}$.
  We can therefore increase $t$ until there is a first contact of $\Sigma=\text{graph($u$)}$ and $\text{graph}(v^{t})$,  which must occur over an interior point of the half-strip $\{(x_1, x_2),\, |x_1|<R,\, x_2>A\}$. This gives a
$P:=(x_1,x_2, u(x_1,x_2)) \in \Sigma,\, x_2 >A$ with $H(P) \geq \frac1{R}$. Since $A$ is arbitrary we have a contradiction.
\end{proof}

\section{asymptotic behavior of complete locally strictly convex translating solitons }
\label{strip}
 In this section we study the asymptotic behavior of complete locally strictly convex translating solitons.

\begin{lemma}\label{grim}Let $\Sigma=\text{graph}(u)$ be a complete mean convex translating soliton in $\R^3$ and suppose that the smallest principle curvature $\kappa_2$ vanishes at a point of $\Sigma$. Then $\kappa_2\equiv 0$ everywhere and
after translation, $\Sigma$ is grim cylinder of the form $\Sigma=\text{graph}(u^{\lambda})$ defined in a strip $\{(x_1,x_2): |x_1|<R\}$ where
\[u^{\lambda}(x_1,x_2):=\lambda^2\log{\sec{(\frac{ x_1}{\lambda})}}\pm\sqrt{\lambda^2-1}\,x_2,\,\, R=\frac{\pi}2 \lambda,\,\lambda\geq 1~.\]
In particular if $\Sigma$ contains a line, then $\Sigma$ is a grim cylinder of the above form.
\end{lemma}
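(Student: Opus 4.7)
The plan has three steps. First, upgrade the pointwise vanishing of $\kappa_2$ to $\kappa_2\equiv 0$ on all of $\Sigma$. By Theorem~\ref{conth.2}, $\Sigma$ is convex, so $\kappa_1\ge\kappa_2\ge 0$ and $D^2u\ge 0$ on the graph domain. The translator equation \eqref{0.26} fits the structural hypotheses of the microscopic convexity / constant rank theorem of Bian--Guan \cite{BG09} already invoked in the introduction, so the rank of $D^2u$ is locally constant on connected components. Mean convexity $H=\kappa_1+\kappa_2>0$ excludes rank $0$, and the hypothesis excludes rank $2$ at the given point, hence $D^2u$ has rank exactly $1$ throughout and $\kappa_2\equiv 0$.

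Second, show that $\Sigma$ is a generalized cylinder. With $K=\kappa_1\kappa_2\equiv 0$ and $\kappa_1>0$, the null direction of the second fundamental form is a smooth unit line field on $\Sigma$ whose integral curves are asymptotic, have zero geodesic curvature, and are therefore straight lines of $\R^3$. The Hartman--Nirenberg theorem, applied to the complete flat convex surface $\Sigma$, forces these rulings to be mutually parallel, so $\Sigma$ is a cylinder over a convex plane curve with common ruling direction $v\in S^2$. Since $\Sigma$ is a graph over the $(x_1,x_2)$-plane, $v$ is not vertical; after a rotation about the $x_3$-axis we may assume $v=(0,\cos\alpha,\sin\alpha)$ with $\cos\alpha>0$.

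Third, reduce to an ODE and solve. The cylinder structure forces the ansatz $u(x_1,x_2)=g(x_1)+(\tan\alpha)\,x_2$; substituting into \eqref{0.26} collapses the PDE to the separable ODE $g''=1+g'^2\cos^2\alpha$. Elementary integration, together with translations in $x_1$ and $x_3$ to normalize the constants of integration, gives
\[ g(x_1)=\sec^2\alpha\,\log\sec(\cos\alpha\cdot x_1) \]
on the maximal strip $|x_1|<\tfrac{\pi}{2}\sec\alpha$. Setting $\lambda=\sec\alpha\ge 1$ and noting $\tan\alpha=\pm\sqrt{\lambda^2-1}$ recovers $u^\lambda$ in the claimed form on $|x_1|<R=\tfrac{\pi}{2}\lambda$. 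The concluding \emph{in particular} assertion follows because the normal curvature of $\Sigma$ vanishes along any straight line contained in $\Sigma$, so convexity forces $\kappa_2=0$ at points of that line, which then reduces to the main statement.

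The principal obstacle I anticipate is verifying cleanly that the soliton PDE \eqref{0.26} lies within the structural framework required by \cite{BG09} (concavity of the right symmetric function of the Hessian, appropriate dependence on lower-order terms); once this is in hand, the remaining steps are classical surface theory plus a one-line ODE integration.
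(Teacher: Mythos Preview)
Your proof is correct, and steps two and three (Hartman--Nirenberg developability plus the ODE integration) match the paper's argument essentially line for line, up to the cosmetic difference that the paper writes the linear coefficient as $\alpha$ and rescales via $\tilde\eta(x_1)=\lambda^{-2}\eta(\lambda x_1)$ rather than solving the ODE directly.

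The genuine difference is in step one. You obtain $\kappa_2\equiv 0$ by invoking Theorem~\ref{conth.2} to get $D^2u\ge 0$ and then applying the Bian--Guan constant rank theorem to the graphical equation~\eqref{0.26}. The paper instead appeals to the intrinsic Simons-type identity of Lemma~\ref{lem0.5}\,ii, $\Delta^f A+|A|^2A=0$, and the strong (tensor) maximum principle: once $\Sigma$ is convex, $\kappa_2\ge 0$ attains an interior zero, so $\kappa_2\equiv 0$. Both routes require convexity (hence Theorem~\ref{conth.2}) as input. What the paper's approach buys is precisely the removal of your self-identified ``principal obstacle'': there is no need to verify the structural hypotheses of \cite{BG09} for~\eqref{0.26}, since the only ingredients are the identity already established in Lemma~\ref{lem0.5} and the elementary strong maximum principle. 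Your approach, on the other hand, is the one the paper itself adopts in the introduction for the higher-dimensional splitting~\eqref{eq0.15}, so it is certainly legitimate in this context; it is just less self-contained for the $n=2$ case at hand.
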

\begin{proof}
If we choose an orthonormal frame $\tau_1,\tau_2$ so that $\kappa_2(P)=h_{22}(P)$, then $\kappa_2\equiv 0<\kappa_1 $ on $\Sigma$ by Lemma \ref{lem0.5} part ii. and the maximum principle. Thus the Gauss curvature $K^{\Sigma}\equiv 0$ and so
$\Sigma$ has a representation (see for example \cite{HN59}) $\Sigma=\text{graph}(z)$ where $z(x_1,x_2)=\eta(x_1)+\alpha x_2$ for a constant $\alpha$ and a scalar function $\eta$ defined in a simply connected region containing the projection of $P$ on the $x_1, x_2$ plane. Therefore
\be \label{eq.30}
(1+\alpha^2)\eta''=1+\alpha^2+\eta'^2.
\ee
Set $\tilde{\eta}(x_1)=\lambda^{-2} \eta(\lambda x_1)$, where $\lambda^2=1+\alpha^2$. Then
\be \label{eq.40}
\tilde{\eta}''=1+\tilde{\eta}'^2
\ee
and so (up to translation of coordinates and an additive constant)
\be \label{eq.45}
\tilde{\eta}(x_1)=\log{\sec{x_1}}~,
\ee
 which proves the lemma.
\end{proof}

We have the following Harnack inequality for the mean curvature H on $\Sigma$ (compare Hamilton \cite{Ham95}, Corollary 1.2).
\begin{lemma}\label{harnack} For any two points $P_1,\,P_2 \in \Sigma$,
\be \label{eq.60} H(P_2)  \geq e^{-d^{\Sigma}(P_1,P_2)}H(P_1)~.
\ee
\end{lemma}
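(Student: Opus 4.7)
The plan is to reduce the inequality to the pointwise estimate
\begin{equation*}
|\nabla^{\Sigma}\log H|^{2} \leq 1 - H^{2} \leq 1 \quad \text{on } \Sigma,
\end{equation*}
and then integrate along a minimizing geodesic. Granted this bound, if $\gamma:[0,L]\to\Sigma$ is any unit-speed minimizing geodesic from $P_1$ to $P_2$, which exists by completeness of $\Sigma$ together with Hopf--Rinow, with $L = d^{\Sigma}(P_1,P_2)$, then
\begin{equation*}
\log H(P_2) - \log H(P_1) = \int_{0}^{L}\langle\nabla\log H,\gamma'(s)\rangle\,ds \geq -\int_{0}^{L}|\nabla\log H|\,ds \geq -L,
\end{equation*}
so exponentiating yields $H(P_2)\geq e^{-d^{\Sigma}(P_1,P_2)}H(P_1)$, as desired.

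To establish the pointwise bound I would differentiate the soliton identity $H=\langle N,e_{3}\rangle$. For any local orthonormal frame $\{\tau_1,\tau_2\}$ on $\Sigma$, the Weingarten relation $\nabla_i N = -h_{ij}\tau_j$ gives
\begin{equation*}
\nabla_i H = \langle \nabla_i N, e_{3}\rangle = -h_{ij}\langle \tau_j, e_{3}\rangle.
\end{equation*}
Choosing the frame to diagonalize the second fundamental form $A$ at the point of interest, with eigenvalues $\kappa_1,\kappa_2$, one obtains $|\nabla H|^{2}=\sum_i \kappa_i^{2}\langle \tau_i, e_{3}\rangle^{2}$. Since $\{\tau_1,\tau_2,N\}$ is an orthonormal basis of $\R^{3}$,
\begin{equation*}
\sum_{i=1}^{2}\langle \tau_i, e_{3}\rangle^{2} = 1 - \langle N, e_{3}\rangle^{2} = 1 - H^{2}.
\end{equation*}
By Theorem \ref{conth.2} the surface $\Sigma$ is convex, so $0\leq \kappa_i \leq \kappa_1+\kappa_2 = H$, and therefore
\begin{equation*}
|\nabla H|^{2} \leq H^{2}\sum_{i}\langle \tau_i, e_{3}\rangle^{2} = H^{2}(1-H^{2}).
\end{equation*}
Dividing by $H^{2}$ delivers the claimed gradient estimate.

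The only nontrivial input is the convexity supplied by Theorem \ref{conth.2}; everything else is a direct computation combined with the fundamental theorem of calculus along a minimizing geodesic, so I anticipate no real obstacle. (Incidentally the same argument yields the slightly sharper statement $H(P_2) \geq \exp\bigl(-\int_{\gamma}\sqrt{1-H^{2}}\,ds\bigr)H(P_1)$, but the stated form is all that is needed in the sequel.)
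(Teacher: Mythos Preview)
Your proof is correct and follows essentially the same route as the paper: compute $\nabla_k H=-\kappa_k\langle\tau_k,e_3\rangle$, derive $|\nabla\log H|\le 1$ from convexity, and integrate along a minimizing geodesic. The only cosmetic difference is the order of estimation in the gradient bound---the paper bounds $\langle\tau_k,e_3\rangle^2\le 1$ first to get $|\nabla H|^2\le |A|^2=H^2-2K\le H^2$, whereas you bound $\kappa_i^2\le H^2$ first and use $\sum_i\langle\tau_i,e_3\rangle^2=1-H^2$---yielding your slightly sharper intermediate inequality $|\nabla\log H|^2\le 1-H^2$, which as you note is not needed downstream.
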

\begin{proof} Since $H=<N,e_3>,\, \nabla_k H=-\kappa_k<\tau_k,e_3>$ so that
\[|\nabla H|^2 \leq |A|^2=H^2-2K\leq H^2~.\]
Therefore, $|\nabla \log{H}|\leq 1$ and \eqref{eq.60} follows.
\end{proof}

\begin{lemma}\label{grad} Let $\Sigma=\text{graph}(u)$ be a complete mean convex translating soliton in $\R^3$ defined in the strip $\{(x_1,x_2): |x_1|<R\}$. Then
 \[H(x_1,x_2):=H((x_1,x_2,u(x_1,x_2)) \leq R-|x_1|~.\]
\end{lemma}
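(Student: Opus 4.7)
I would apply the drift Laplacian maximum principle to a linear comparison function. Working on the right half $\Sigma_{+} := \Sigma \cap \{x_{1} \geq 0\}$, the analogous argument on $\Sigma_{-}$ then gives the bound $H \leq R - |x_{1}|$ on all of $\Sigma$. The strategy rests on two elementary identities: first, the coordinate function $x_{1}$ is drift-harmonic on $\Sigma$; and second, $\Delta^{f} H = -|A|^{2}H$ from Lemma \ref{lem0.5}(iii).

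For the first identity, using $\Delta^{\Sigma} F = HN$ for the position vector $F$ (the translator identity recalled in the introduction), one reads off $\Delta^{\Sigma} x_{1} = HN_{1}$. Since $\nabla^{\Sigma} x_{1} = e_{1} - N_{1}N$, we have $\langle \nabla^{\Sigma}x_{1}, e_{3}\rangle = -N_{1}H$, and so $\Delta^{f} x_{1} = HN_{1} - N_{1}H = 0$. Setting
\[
W := H - (R - x_{1}),
\]
this gives $\Delta^{f} W = -|A|^{2} H \leq 0$, so $W$ is drift-superharmonic on $\Sigma_{+}$.

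The strong maximum principle then forbids $W$ from attaining a strict positive interior maximum: such a point would force $|A|^{2} H \equiv 0$ in a neighborhood, hence (by real-analyticity of the translator equation) on all of $\Sigma$, making $\Sigma$ a plane---impossible for a graph over a bounded strip. Convexity of $\Sigma$ (Theorem \ref{conth.2}) forces $|\nabla u| \to \infty$ as $x_{1} \to R^{-}$, so $H \to 0$ and $W \to 0$ along any sequence with $x_{1} \to R$. On $\{x_{1} = 0\} \cap \Sigma$, $W = H - R \leq 1 - R$; this is nonpositive since $R \geq \pi/2 > 1$, which in turn holds because Lemma \ref{grim} identifies non-strictly-convex mean convex translators over strips with tilted grim cylinders having $R = \lambda\pi/2 \geq \pi/2$, and a separate barrier argument (essentially Theorem \ref{thm.strip}(i)) excludes locally strictly convex translators over strips of width less than $\pi$.

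The main obstacle is controlling $W$ as $x_{2} \to \pm\infty$. I plan to address this using translation invariance and the global curvature bound from Theorem \ref{curvature}: if $\sup_{\Sigma_{+}} W > 0$ is approached along a sequence $P_{n}$ with $|x_{2}(P_{n})| \to \infty$, translating $\Sigma$ by $-x_{2}(P_{n}) e_{2}$ and extracting a smooth subsequential limit $\Sigma_{\infty}$ yields a complete mean convex translator over $\{|x_{1}| < R\}$ on which the corresponding function $W_{\infty}$ attains a strict positive interior maximum, contradicting the strong maximum principle argument above. This contradiction completes the proof of $W \leq 0$ on $\Sigma_{+}$, and hence of the lemma.
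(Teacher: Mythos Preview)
Your computation $\Delta^{f} W = -|A|^{2}H \le 0$ is correct, but you have applied the maximum principle in the wrong direction. A function satisfying $\Delta^{f}W \le 0$ is drift-\emph{superharmonic}; the strong maximum principle forbids such a function from attaining an interior \emph{minimum}, not an interior maximum. (Think of a concave function of one variable: $W''\le 0$, and interior maxima are the rule.) At a putative positive interior maximum of $W$, the second-order test gives only $\Delta^{f}W\le 0$, which is already the global inequality and yields no contradiction; nothing forces $|A|^{2}H\equiv 0$ as you claim. The same objection applies after passing to a translated limit $\Sigma_{\infty}$, so the compactness step at the end does not rescue the argument. There is also a circularity issue: you invoke Theorem~\ref{thm.strip}(i) to secure $R\ge\pi/2$, but in the paper Lemma~\ref{grad} is an ingredient in the proof of Theorem~\ref{thm.strip}.

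The paper's argument is entirely different and much shorter. Convexity (Theorem~\ref{conth.2}) gives $K\ge 0$, hence $|A|^{2}=H^{2}-2K\le H^{2}$; together with the graph inequality $\sum u_{ij}^{2}\le W^{6}|A|^{2}$ this yields $\sum u_{ij}^{2}\le W^{4}$, from which one checks the \emph{Euclidean} gradient bound $|DH|=|D(1/W)|\le 1$ on the base strip. Since convexity also forces $H\to 0$ as $|x_{1}|\to R$ along each horizontal slice, integrating $|\partial_{x_{1}}H|\le 1$ inward from the edge of the strip gives $H(x_{1},x_{2})\le R-|x_{1}|$ immediately.
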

\begin{proof} We use that $\frac{\sum u_{ij}^2}{W^6} \leq |A|^2\leq H^2=\frac1{W^2}$ where $W^2=1+ |\nabla u|^2$.
Then $|DW|\leq W^2$ or $|DH|=|D(\frac1W)|\leq 1$. By the convexity of u and the fact that $u(x_1, x_2)\goto \infty$ as
$R-|x_1| \goto 0,\, \text{$x_2$\, fixed}$,  we see that $H(x_1, x_2)\goto 0$ as $R-|x_1| \goto 0,\, \text{$x_2$\, fixed}$. Hence
$H(x_1,x_2) \leq \min{(R-x_1, x_1+R)}= R-|x_1|$.
\end{proof}

\begin{lemma}\label{arctan}  Let $\Sigma=\text{graph}(u)$ be a  convex translating soliton in $\R^3$ defined in the strip $\{(x_1,x_2): |x_1|<R\}$. Then
\be \label{eq69}
\begin{aligned}
\, &|\frac{d}{dx_i}\arctan{u_{x_i}}| \leq 1,\, \,\, i=1,2,\\
\, &|\frac{d}{dx_2}\sqrt{1+(u_{x_1})^2}\ |\leq |u_{x_1}|\sqrt{1+(u_{x_2})^2}.
\end{aligned}
\ee
\end{lemma}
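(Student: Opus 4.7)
The two estimates in \eqref{eq69} are pointwise algebraic assertions about the Hessian of $u$, given the PDE \eqref{0.26} and convexity of $u$ (equivalent to convexity of $\Sigma$ under the upward normal $N=(-u_{x_1},-u_{x_2},1)/W$, $W=\sqrt{1+|\nabla u|^2}$). The plan is to reduce each inequality to a pointwise bound on second derivatives of $u$, and then read those bounds off the soliton equation by interpreting the relevant ratios as normal curvatures of $\Sigma$.

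For the first inequality, convexity gives $u_{x_ix_i}\ge 0$, so the identity $\frac{d}{dx_i}\arctan u_{x_i}=\frac{u_{x_ix_i}}{1+u_{x_i}^2}$ reduces the claim to $u_{x_ix_i}\leq 1+u_{x_i}^2$ for $i=1,2$. The key step will be to recognize $\frac{u_{x_ix_i}}{W(1+u_{x_i}^2)}$ as the normal curvature $\kappa_N(\partial_{x_i})$ of $\Sigma$, computed as the Rayleigh quotient $h(\partial_{x_i},\partial_{x_i})/g(\partial_{x_i},\partial_{x_i})$ with the standard graph formulas $g_{ij}=\delta_{ij}+u_{x_i}u_{x_j}$ and $h_{ij}=u_{x_ix_j}/W$. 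Since $\Sigma$ is convex, both principal curvatures are nonnegative, so $\kappa_N(\partial_{x_i})\leq\kappa_1\leq\kappa_1+\kappa_2=H=1/W$, yielding $u_{x_ix_i}\leq 1+u_{x_i}^2$.

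For the second inequality, I expand $\frac{d}{dx_2}\sqrt{1+u_{x_1}^2}=\frac{u_{x_1}u_{x_1x_2}}{\sqrt{1+u_{x_1}^2}}$, reducing the claim to $|u_{x_1x_2}|\leq\sqrt{(1+u_{x_1}^2)(1+u_{x_2}^2)}$. Convexity of $u$ gives the discriminant bound $u_{x_1x_2}^2\leq u_{x_1x_1}u_{x_2x_2}$, and combining with the two estimates from the previous paragraph closes the proof.

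The main bookkeeping issue is computing the Rayleigh quotient carefully in the non-orthonormal frame $\{\partial_{x_1},\partial_{x_2}\}$ and keeping the sign conventions for convexity vs.\ the choice of normal consistent. As a purely algebraic alternative that bypasses geometry entirely, one can use \eqref{0.26} to solve for $u_{x_2x_2}$ in terms of $(u_{x_1x_1},u_{x_1x_2},u_{x_1},u_{x_2})$ and substitute into $u_{x_1x_2}^2\leq u_{x_1x_1}u_{x_2x_2}$; the discriminant of the resulting quadratic in $u_{x_1x_2}$ turns out to be nonnegative precisely when $u_{x_1x_1}\leq 1+u_{x_1}^2$, after using the identity $(1+u_{x_1}^2)(1+u_{x_2}^2)=W^2+u_{x_1}^2u_{x_2}^2$.
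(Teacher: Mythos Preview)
Your proof is correct and takes a genuinely different route from the paper. The paper works purely algebraically from the soliton equation \eqref{0.26}: it moves the cross term $2u_{x_1}u_{x_2}u_{x_1x_2}$ to the right side, bounds it via convexity by $2|u_{x_1}||u_{x_2}|\sqrt{u_{x_1x_1}u_{x_2x_2}}$, and then applies the weighted AM--GM inequality
\[
2|u_{x_1}||u_{x_2}|\sqrt{u_{x_1x_1}u_{x_2x_2}}\leq (1+u_{x_1}^2)u_{x_2x_2}+\frac{u_{x_1}^2u_{x_2}^2}{1+u_{x_1}^2}\,u_{x_1x_1},
\]
after which the $(1+u_{x_1}^2)u_{x_2x_2}$ terms cancel and the remaining inequality simplifies to $u_{x_1x_1}\leq 1+u_{x_1}^2$. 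Your argument instead recognizes $\dfrac{u_{x_ix_i}}{W(1+u_{x_i}^2)}=\dfrac{h_{ii}}{g_{ii}}$ as the normal curvature of $\Sigma$ in the coordinate direction $\partial_{x_i}$ and bounds it by $\kappa_1\leq \kappa_1+\kappa_2=H=1/W$ using convexity; the second inequality then follows for both of you in the same way, from $u_{x_1x_2}^2\leq u_{x_1x_1}u_{x_2x_2}$. Your geometric reading is cleaner and makes transparent \emph{why} the bound $u_{x_ix_i}\leq 1+u_{x_i}^2$ should hold (any normal curvature on a convex surface is at most $H$), whereas the paper's AM--GM step is a self-contained calculation that avoids invoking the shape operator but obscures the underlying reason. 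The ``algebraic alternative'' you sketch at the end is close in spirit to the paper's method, though the paper organizes the algebra via AM--GM rather than as a discriminant condition.
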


\begin{proof}
Since $u$  is a convex graphical solution to the translating soliton equation,
 \be \label{eq71}
 \begin{aligned}
 \,&(1+u_{x_2}^2)u_{x_1 x_1}+(1+u_{x_1}^2)u_{x_2 x_2}=2u_{x_1}u_{x_2}u_{x_1 x_2}+(1+u_{x_1}^2+u_{x_2}^2)\\
 \,&\leq 2|u_{x_1}| |u_{x_2}| \sqrt{u_{x_1 x_1}u_{x_2 x_2}}+(1+u_{x_1}^2+u_{x_2}^2)\\
 \,&\leq (1+u_{x_1}^2)u_{x_2 x_2}+\frac{u_{x_1}^2 u_{x_2}^2}{(1+u_{x_1}^2)}u_{x_1 x_1}+(1+u_{x_1}^2+u_{x_2}^2)~.
 \end{aligned}
 \ee
 This implies
 \be \label{eq72}
 \begin{aligned}
  \,&u_{x_1 x_1}\leq 1+u_{x_1}^2 \, \, \text {and by symmetry $\,\, u_{x_2 x_2}\leq 1+u_{x_2}^2 $},\\
  \,&| u_{x_1 x_2}| \leq \sqrt{u_{x_1 x_1}u_{x_2 x_2}}\leq \sqrt{1+u_{x_1}^2}\sqrt{1+u_{x_2}^2}~,
  \end{aligned}
  \ee
  and \eqref{eq69} follows.
\end{proof}

\begin{lemma}\label{entire}
Let $\Sigma=\text{graph}(u)$ be a complete mean convex translating soliton in $\R^3$ defined over $\R^2$. Then $H(P) \goto 0$ for $P\in \Sigma$ tending to infinity.
\end{lemma}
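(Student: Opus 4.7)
My plan is to reduce the statement to the classification of entire convex graphical translators that is already available at this point in the paper, and then read off the mean curvature decay from the explicit asymptotic profile of the bowl soliton.

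First I would observe that since $\Sigma=\text{graph}(u)$ is a complete two-sided translating soliton in $\R^3$ over all of $\R^2$, $u$ is a smooth entire solution of the graphical translator equation \eqref{0.25}. In particular $H=1/\sqrt{1+|\nabla u|^2}$ is strictly positive on $\Sigma$ (a vertical plane, which is the only $H\equiv 0$ alternative produced by Lemma \ref{lem0.5}(iii) and the strong maximum principle, is not graphical over $\R^2$). By Theorem \ref{conth.2} the surface $\Sigma$ is convex, and therefore Corollary \ref{cor2} applies and gives that, after a translation and an orthogonal change of coordinates in the $x_1x_2$--plane, $\Sigma$ is the rotationally symmetric bowl soliton.

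Next I would invoke the asymptotic expansion of the bowl soliton recorded in the introduction (with $n=2$): $u(x)=\tfrac12|x|^2-\log|x|+O(1)$ as $|x|\to \infty$, so that $|\nabla u(x)|\to \infty$ as $|x|\to \infty$. Since $\Sigma$ is an entire graph, a point $P=(x,u(x))$ of $\Sigma$ tends to infinity in $\R^3$ if and only if $|x|\to \infty$ in $\R^2$, and
\[
H(P) \,=\, \langle N, e_3\rangle \,=\, \frac{1}{\sqrt{1+|\nabla u(x)|^2}} \,\longrightarrow\, 0
\]
along any such sequence, proving the lemma.

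There is essentially no hard step: the real content has already been absorbed into the convexity Theorem \ref{conth.2} (via Corollary \ref{cor2}) and into the known bowl asymptotics. The only point I would want to be careful about is confirming that Corollary \ref{cor2} genuinely delivers the axisymmetric bowl (and not merely ``rotational symmetry in an appropriate coordinate system'' in some weaker sense), so that the explicit quadratic-plus-log asymptotic is legitimately available; this is exactly the content of Corollary \ref{cor2} combined with the existence/uniqueness of the axisymmetric bowl recalled in the introduction, so no further work is needed.
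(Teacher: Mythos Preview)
Your argument is correct and not circular: Corollary~\ref{cor2} is established in Section~\ref{pr} using only Theorem~\ref{conth.2} and Wang's theorem, neither of which depends on Lemma~\ref{entire}, so you are free to invoke it here. Once $\Sigma$ is identified with the bowl soliton, the asymptotic $u(x)=\tfrac12|x|^2-\log|x|+O(1)$ together with convexity forces $|\nabla u|\to\infty$ (a convex function with superlinear growth has unbounded gradient), and $H=1/\sqrt{1+|\nabla u|^2}\to 0$ follows.

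The paper, however, takes a genuinely different route. Rather than first classifying $\Sigma$ and then reading off the decay, it argues directly by contradiction: if $H(P_n)\geq\theta>0$ along a divergent sequence, then the translates $\Sigma_n=\Sigma-P_n$ subconverge (by the curvature bound of Theorem~\ref{curvature}) to a convex translator $\Sigma_\infty$ with $H(0)>0$; since $\frac{P_n-P_0}{|P_n-P_0|}\to\omega$, the limit contains a full line, and Lemma~\ref{grim} forces $\Sigma_\infty$ to be a grim cylinder over a strip, contradicting the fact that $\Sigma$ (and hence each $\Sigma_n$) is graphical over all of $\R^2$. Your approach is shorter once Corollary~\ref{cor2} and the bowl asymptotics are granted, but it imports Wang's classification and the external ODE analysis of the bowl. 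The paper's approach is more self-contained, using only Lemma~\ref{grim} and the curvature bound, and---more to the point---the translate-and-extract-a-line technique is reused verbatim in the proof of Theorem~\ref{thm.strip1} (``Arguing as in Lemma~\ref{entire}\ldots''), where the soliton lives over a strip and Corollary~\ref{cor2} is unavailable. So the paper's proof serves double duty as a template for the strip case, which your shortcut would not provide.
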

\begin{proof} We slightly modify the argument of Haslhofer \cite{Hasl15}. Fix $P_0 \in \Sigma$ and suppose there is a sequence $P_n\in \Sigma$ tending to infinity with $\liminf_{n\goto \infty} H(P_n) >0$. Passing to a subsequence, we may assume
$\frac{P_n-P_0}{|P_n-P_0|}$ converges to a unit direction $\omega$. Let $\Sigma_n=\Sigma-P_n$ be the surface obtained from $\Sigma$ by translation of $P_n$ to the origin. Since $\Sigma$ has bounded principle curvatures, so do the  $\Sigma_n.$ Choosing a  subsequence which we still denote by $\Sigma_n,$ the
$\Sigma_n$ converge smoothly to $\Sigma_\infty$, a convex complete translating soliton.  Moreover, $\Sigma_\infty$ is not a vertical plane since $H(0)>0$, so must be a graph. Since the region K above $\Sigma$ is convex and $\frac{P_n-P_0}{|P_n-P_0|}\goto \omega$, the limit $\Sigma_\infty$ contains a line (see Lemma 3.1 of \cite{BL16} for more details). Therefore by Lemma \ref{grim}, $\Sigma_\infty$ is a  grim cylinder and thus is a graph over a strip, a contradiction.
\end{proof}

\begin{lemma} \label{pr2.cor}  Let $\Sigma=\text{graph}(u)$  be a complete locally strictly convex translating soliton defined over a strip region $\mathcal{S}^{\lambda}:=\{(x_1,x_2): |x_1|<R:=\lambda \frac{\pi}2\}$. Then there exist sequences $P_n=(x_1^n, x_2^n, u(x_1^n, x_2^n)),\, \ol{P}_n=(\ol{x}_1^n, \ol{x}_2^n, u(\ol{x}_1^n, \ol{x}_2^n))\in \Sigma$ with $x_2^n \goto \infty,\, \ol{x}_2^n\goto -\infty$ and $H(P_n), H(\ol{P}_n) \geq \theta>0$.
\end{lemma}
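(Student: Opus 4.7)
The plan is to run the tilted-cylinder argument from Theorem \ref{th2} twice, once directly on $\Sigma$ and once reflected in $x_2$. By Corollary \ref{cor1} together with the hypothesis that $\Sigma$ is defined over the strip $\mathcal{S}^{\lambda}$ and not all of $\R^2$, $\Sigma$ is not the bowl soliton, so Theorem \ref{th2}'s contrapositive produces $\theta>0$ and a sequence $Q_n\to\infty$ in $\Sigma$ with $H(Q_n)\geq\theta$. Lemma \ref{grad} forces $|x_1(Q_n)|\leq R-\theta$, hence $|x_2(Q_n)|\to\infty$. Passing to a subsequence and, if necessary, replacing $u(x_1,x_2)$ by $u(x_1,-x_2)$ (which preserves the translating-soliton equation, the strip, and local strict convexity), we may assume $x_2(Q_n)\to+\infty$; this yields $P_n$.

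For the $\bar P_n$ sequence I argue by contradiction: suppose $H(P)\to 0$ whenever $P\in\Sigma$ has $x_2(P)\to-\infty$. Let $x_1(A):=\min\{x_1:u_{x_1}(x_1,A)=0\}$, which is well defined because $u(\cdot,A)$ is convex and blows up as $x_1\to\pm R$. Using that $\arctan u_{x_1}(x_1,A)\to\pm\pi/2$ as $x_1\to\pm R$ together with the Lipschitz bound $|\tfrac{d}{dx_1}\arctan u_{x_1}|\leq 1$ from Lemma \ref{arctan}, one obtains $|x_1(A)|\leq R-\pi/2$ uniformly in $A$. The convexity monotonicity of $u_{x_2}(x_1,\cdot)$ in $x_2$ then gives
\[
u_{x_2}(x_1(A),A)\leq u_{x_2}(x_1(A),0)\leq M:=\sup_{|x|\leq R-\pi/2} u_{x_2}(x,0)<\infty\qquad\text{for }A\leq 0.
\]
The contradiction assumption together with $u_{x_1}(x_1(A),A)=0$ force $|u_{x_2}(x_1(A),A)|=|Du(x_1(A),A)|\to\infty$ as $A\to-\infty$, and the upper bound $M$ then pins the sign, yielding $u_{x_2}(x_1(A),A)\to-\infty$.

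Armed with this I close the argument by running Theorem \ref{th2} reflected. Pick $A_0\leq 0$ with $u_{x_2}(x_1(A_0),A_0)\leq-\delta<0$; after translating in $x_2$ so that $A_0=0$ and normalizing $u(x_1(0),0)=0$, convexity inequalities parallel to \eqref{eq.10} give $u_{x_2}(x_1(A),A)\leq-\delta$ and $u(x_1(A),A)\geq-A\delta$ for all $A\leq 0$. Combined with $u_{x_2}(x_1(A),A)\to-\infty$, this yields the super-linear estimate $u(x_1,B)/|B|\to+\infty$ as $B\to-\infty$. A first-contact comparison from below with the tilted half-cylinder $v^t(x_1,x_2)=-\sqrt{1+t^2}\sqrt{R^2-x_1^2}+t(A_*-x_2)$ of constant mean curvature $1/R$ on the half-strip $\{|x_1|\leq R,\,x_2\leq A_*\}$ for $A_*\ll 0$ (chosen below $-R$) then produces an interior contact point $P$ with $x_2(P)\leq A_*$ and $H(P)\geq 1/R$, contradicting $H\to 0$ along $x_2\to-\infty$ once $|A_*|$ is sufficiently large. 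The main obstacle is the sign determination $u_{x_2}(x_1(A),A)\to-\infty$ (rather than $+\infty$): this rests squarely on Lemma \ref{arctan}'s uniform confinement $|x_1(A)|\leq R-\pi/2$, which keeps the minimum curve inside a compact sub-interval of $(-R,R)$ on which the $x_2$-monotonicity upper bound on $u_{x_2}$ is finite.
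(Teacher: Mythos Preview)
Your argument is correct and follows the paper's overall strategy: invoke Theorem \ref{th2} for the first sequence, then argue by contradiction for the second via super-linear growth and the tilted-cylinder comparison. The one substantive difference lies in how you pin the sign $u_{x_2}(x_1(A),A)\to -\infty$. The paper asserts (using that $\Sigma$ is not a grim cylinder) that after a translation one may take $u_{x_2}(x_1(0),0)=\delta>0$ and then uses the convexity inequality $0\geq B(\delta-u_{x_2}(x_1(B),B))$ to bound $u_{x_2}(x_1(B),B)\leq\delta$; the justification for why $\delta>0$ (rather than merely $\delta\neq 0$) can be arranged after the $e_2$-orientation has already been fixed by the first part is left somewhat implicit. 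Your route via Lemma \ref{arctan}---confining $|x_1(A)|\leq R-\tfrac{\pi}{2}$ and then using the monotonicity of $u_{x_2}(x_1,\cdot)$ to get a finite upper bound $M$ on the compact sub-interval---is more explicit and self-contained, and as a bonus it immediately yields $R\geq\tfrac{\pi}{2}$ (i.e.\ the nonexistence for $\lambda<1$, part of Theorem \ref{thm.strip}\,i) as a byproduct of the Lipschitz estimate. Either way, once the upper bound is in hand, both proofs conclude identically.
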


\begin{proof} By Theorem \ref{th2} there is a sequence of points $P_n=(x_1^n, x_2^n, u(x_1^n, x_2^n))\in \Sigma$ where (after relabeling axes if necessary) $\liminf_{n\goto \infty}H(P_n)\geq \theta>0$ and $x_2^n \goto +\infty$. Note that by Lemma \ref{grad}, $ |x_1^n| \leq R-\theta$.
We claim there is also a sequence $\ol{P}_n=(\ol{x}_1^n, \ol{x}_2^n, u(\ol{x}_1^n, \ol{x}_2^n))\in \Sigma$ with $\ol{x}_2^n\goto -\infty$ and $ H(\ol{P}_n) \geq \theta>0$. If not, then for $x_2\goto -\infty,\, H(x_1,x_2):= H(x_1,x_2, u(x_1,x_2)) \goto 0$.
For any $B\leq 0$ let $x_1(B)$ be the unique value so that $u_{x_1}(x_1(B),B)=0$. Since $\Sigma$ is not a grim cylinder,
we may assume by a translation of coordinates that $u_{x_2}(x_1(0),0) = \delta>0$. We normalize $u(x_1(0), 0)=0$; then
\[u(x_1(B), B) \geq u(x_1(0),0)+Bu_{x_2}(x_1(0),0)=B\delta~,\]
and
\[ 0=u(x_1(0), 0)\geq u(x_1(B), B)-Bu_{x_2}(x_1(B), B)\geq B(\delta-u_{x_2}(x_1(B), B))~.\]
 Hence $u_{x_2}(x_1(B), B))\leq \delta$. Since $ H(x_1,x_2) \goto 0$ as $x_2\goto -\infty$, we conclude
 \be \label{eq60.5}
 u_{x_2}(x_1(B), B)) \goto -\infty \,\,\text{as $ B \goto -\infty$}.
 \ee
 Therefore
  \be \label{eq.61}
  u(x_1,x_2) \geq u(x_1(B),B) +(x_2-B)u_{x_2}(x_1(B), B) \geq B\delta+ (x_2-B)u_{x_2}(x_1(B), B)
  \ee
Now choose $x_2=\Lambda<0$ and $B=\frac{\Lambda}2$. Then
\[u(x_1, \Lambda)\geq \frac{\delta \Lambda}2+ \frac{\Lambda}2 u_{x_2}(x_1(\frac{\Lambda}2), \frac {\Lambda}2)~,\]
hence
\be \label{eq.62}
\lim_{\Lambda \goto -\infty} \frac{u(x_1, \Lambda)}{\Lambda}\leq \frac{\delta}2+ \frac12 u_{x_2}(x_1(\frac{\Lambda}2), \frac{\Lambda}2)\goto -\infty
\ee
by \eqref{eq60.5}. Thus $u(x_1, \Lambda) \goto \infty$ superlinearly as $\Lambda \goto -\infty$. We now compare $\Sigma$ with a tilted cylinder of radius R. Consider $x_3=v^t (x_1,x_2):=-\sqrt{1+t^2} \sqrt{R^2-x_1^2}+t(x_2-B)$ in the half-strip
$\mathcal{S}^B:=\{(x_1,x_2): |x_1|\leq R,\, x_2 \leq B<0\}$. Since $v^t \leq t(x_2-B)$, for any choice of $t\leq 0,\, u>v^t$ for $x_2$ sufficiently small (i.e. $x_2$ large negative) by \eqref{eq.62}. Also $u>v^t $ for $|x_1|=R$ and $x_2=B<0$, as soon as $u(x_1, B)>0$.
For $t \geq -\delta, \, u>v^t$ in $\mathcal{S}^B$ and since
\[ \lim_{t\goto -\infty}v^t(x_1, 3B) \geq \lim_{t\goto -\infty}(2Bt-\sqrt{1+t^2}\,R) \goto \infty~,\]
we can decrease t until there is a first contact point $P \in \mathcal{S}^B$ of $\text{graph}(v^t)$ and $\Sigma$. At $P\in \Sigma,\,
H(P) \geq \frac1R,\, $ a contradiction.

\end{proof}

\begin{lemma} \label{maxprinc} Let $\Sigma=\text{graph}(u)$  be a complete locally strictly convex translating soliton defined over a strip  $\mathcal{S}_R:=\{(x_1,x_2): |x_1|<R\}$ with $\max_{ |x_1|\leq R-\delta}W(x_1,0)\leq C_0$. If $W:=\sqrt{1+|\nabla u|^2}$ satisfies $\sup_{x_2\geq 0}W(0,x_2)\leq C_1$, then
\[W(x_1,x_2)\leq  2\frac{R-\delta}{\delta}\max{(C_0,C_1)}\,\,\text{in the half-strip $\mathcal{S}^{+}_{R-\frac32 \delta}:=\mathcal{S}_{R-\frac32 \delta}\cap \{x_2 \geq 0\}$}~.\]
\end{lemma}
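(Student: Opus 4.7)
The strategy is a barrier argument based on the function
\begin{equation*}
h(x_1) := \frac{R-\delta-|x_1|}{M(R-\delta)}, \qquad M := \max(C_0,C_1),
\end{equation*}
defined on $|x_1|\leq R-\delta$. Since $W = 1/H$ and $h(\pm(R-\tfrac{3}{2}\delta)) = \delta/(2M(R-\delta))$, it suffices to prove the pointwise estimate $H(x_1,x_2)\geq h(x_1)$ on the whole half-strip $\Omega := \mathcal{S}_{R-\delta}\cap\{x_2\geq 0\}$. Since $h$ has a kink at $x_1=0$, the argument is carried out separately on the two halves $\Omega^{\pm} := \Omega\cap\{\pm x_1>0\}$, on each of which $h$ is smooth and linear.

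The central observation is that the coordinate functions $x_1,x_2$ on $\Sigma$ are $\Delta^f$-harmonic, where $\Delta^f := \Delta^{\Sigma} + \langle\nabla^{\Sigma}\cdot,\, e_3\rangle$ is the drift Laplacian: combining the standard identity $\Delta^{\Sigma} x_i = HN_i$ with $\langle\nabla^{\Sigma} x_i, e_3\rangle = \delta_{i3} - N_i N_3 = \delta_{i3} - HN_i$ gives $\Delta^f x_i = \delta_{i3}$. Hence, on $\Omega^{+}$, both $h$ (linear in $x_1$) and $x_2$ are $\Delta^f$-harmonic. Using Lemma \ref{lem0.5}(iii), $\Delta^f H = -|A|^2 H$, one then sees that for each $\varepsilon>0$ the test function
\begin{equation*}
u_{\varepsilon} := H - h(x_1) + \varepsilon\, x_2
\end{equation*}
satisfies $\Delta^f u_{\varepsilon} = -|A|^2 H < 0$ strictly on $\Omega^{+}$, since $H>0$ by hypothesis and $|A|^2 = \kappa_1^2+\kappa_2^2 > 0$ by local strict convexity. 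Consequently $u_{\varepsilon}$ cannot attain an interior local minimum in $\Omega^{+}$.

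The boundary and asymptotic check is direct. On $\{x_2=0,\ 0\leq x_1\leq R-\delta\}$ we have $H\geq 1/C_0\geq 1/M\geq h(x_1)$, so $u_{\varepsilon}\geq 0$. On $\{x_1=0,\ x_2\geq 0\}$, $H(0,x_2)\geq 1/C_1\geq 1/M = h(0)$, so $u_{\varepsilon}\geq \varepsilon x_2\geq 0$. On $\{x_1=R-\delta,\ x_2\geq 0\}$, $h\equiv 0$ forces $u_{\varepsilon}=H+\varepsilon x_2\geq 0$. Moreover, the crude bound $u_{\varepsilon}(x_1,x_2)\geq \varepsilon x_2 - 1/M\to +\infty$ as $x_2\to +\infty$ guarantees that any global minimum of $u_{\varepsilon}$ on $\overline{\Omega^{+}}$ must be attained on the finite boundary, and is thus $\geq 0$. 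Hence $u_{\varepsilon}\geq 0$ throughout $\overline{\Omega^{+}}$, and letting $\varepsilon\to 0^{+}$ yields $H\geq h$ on $\Omega^{+}$. The same argument applies on $\Omega^{-}$, and evaluating $1/h$ at $|x_1|=R-\tfrac{3}{2}\delta$ produces the stated bound $W\leq 2(R-\delta)M/\delta$.

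The main subtlety, which the perturbation term $\varepsilon\, x_2$ resolves, is the absence of any a priori control at $x_2=+\infty$: without the penalty one could not rule out an infimum of $H-h$ escaping to infinity. The identity $\Delta^f x_2 = 0$ is precisely what allows this linear penalty to make $u_{\varepsilon}$ coercive in $x_2$ while preserving the strict $\Delta^f$-superharmonicity that blocks interior minima. Everything rests on the elementary observation $\Delta^f x_i = \delta_{i3}$, which exploits the translator equation to render the piecewise-linear barrier $h$ harmonic for the natural intrinsic operator on $\Sigma$.
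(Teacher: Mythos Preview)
Your argument is correct. Both you and the paper exploit the same core mechanism---a linear-in-$x_1$ barrier combined with a device to handle $x_2\to+\infty$---but the packaging is genuinely different. The paper works with $W$ in graph coordinates: it derives $\mathcal{L}W:=\big(a^{ij}\partial_i\partial_j-\tfrac{2}{W}a^{ij}W_i\partial_j\big)W=|A|^2/W^3>0$, multiplies by the truncated linear cutoff $\eta^N=(1-\tfrac{x_1}{R-\delta}-\tfrac{x_2}{N})_+$, observes $\mathcal{L}(\eta^N W)>0$ so $\eta^N W$ has no interior maximum, and lets $N\to\infty$. You instead work with $H$ intrinsically via the drift Laplacian, exploiting the clean identity $\Delta^f x_i=\delta_{i3}$ to make the linear barrier $h$ exactly $\Delta^f$-harmonic, and you replace the truncation by the additive penalty $\varepsilon x_2$ with $\varepsilon\to 0$. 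The two arguments are dual: your $h$ is essentially the paper's $\eta^\infty/M$, and ``$H\geq h$'' is equivalent to ``$\eta^\infty W\leq M$''. Your route is arguably more geometric---it makes transparent that the barrier is harmonic for the \emph{natural} operator on $\Sigma$ and avoids deriving a bespoke operator $\mathcal{L}$ for $W$---while the paper's route stays entirely in the parameter domain and yields the slightly stronger differential inequality $\mathcal{L}W\geq 1/(2W)$ along the way.
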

\begin{proof}   We give the proof for the right half  of $\mathcal{S}^{+}_{R-\frac32 \delta}$ (where $0\leq x_1 \leq
R-\frac32 \delta$); the proof for the left half of $\mathcal{S}_{R-\frac32 \delta}$ is analogous.

For an n-dimensional hypersurface $\Sigma=\text{graph}(u)$,
\[ \Delta^f H:=\Delta^{\Sigma}H+<\nabla^{\Sigma}H, e_{n+1}>=a^{ij}H_{x_i x_j}~,\]
 where $a^{ij}:=\delta_{ij}-\frac{u_i u_j}{W^2}$.
 Since $H=\frac1W$, we obtain from Lemma \ref{lem0.5} part iii.,
\be \label{eq64}
\mathcal{L}W:= (a^{ij}\partial_i \partial_j-\frac2{W}a^{ij}W_i \partial_j)W=\frac{|A|^2}{W^3} \geq \frac1{2W}
\ee
for $n=2$.
For $N>1$ fixed and large, set $\eta=\eta^N:=(1-\frac{x_1}{R-\delta}-\frac{x_2}N)_{+}$ in the domain
$D=\{0<x_1<R-\delta,\, x_2>0\}$. Since $\eta$ is linear in $D\cap\{\eta>0\}$ and
\[\mathcal{L}(\eta W)=\eta \mathcal{L}W+Wa^{ij}\eta_{ij}\geq \frac{\eta}{2W}~,\]
$\eta W$ cannot have an interior maximum in $D\cap\{\eta>0\}$.   Thus $\eta W$ achieves its maximum when $x_1=0$ or $x_2=0$. Restricting to $D':=\{0\leq x_1 \leq R-\frac{3 \delta}2,\, x_2 \geq 0\}$ and letting $N\goto \infty$ gives \[\frac{\delta}{2(R-\delta)}W \leq \eta W \leq \max{(C_0, C_1)}\,\,\text{ in $D'$}~.\]
This completes the proof.
\end{proof}

We now analyze the asymptotic behavior of complete locally strictly convex translating solitons $\Sigma=\text{graph}(u)$ in $\R^3$ that are defined over the strip region $\mathcal{S}^{\lambda}$.

\begin{theorem}\label{thm.strip1} Let $\Sigma=\text{graph}(u)$  be a complete locally strictly convex translating soliton defined over a strip region $\mathcal{S}^{\lambda}:=\{(x_1,x_2): |x_1|<R:=\lambda \frac{\pi}2,\, \lambda\geq 1\}$. Then (after possibly relabeling the $e_2$ direction)\\
i. For  $\lambda\leq 1$ there is no locally strictly convex solution in $\mathcal{S}^{\lambda}$.\\
ii. $\lim_{x_2\goto +\infty} u_{x_2}(x_1,x_2)=L:=\sqrt{\lambda^2-1},\,\lambda>1$.\\
iii. $\lim_{x_2\goto -\infty} u_{x_2}(x_1,x_2)=-L$.\\
iv.
\be \label{eq.65}
\begin{aligned}
 \,&\lim_{A\goto \pm \infty}(u(x_1, x_2+A)-u(0, A))=u^{\lambda}(x_1,x_2)\\
 \,&\lim_{A\goto \pm \infty}u_{x_1}(x_1, A)=\lambda \tan{\frac{x_1}{\lambda}}~.
 \end{aligned}
 \ee
 The above limits hold uniformly on $\mathcal{S}^{\e}=\{(x_1, x_2)\in \mathcal{S}^{\lambda}: |x_1| \leq R-\e\}$.\\
 v. \,\,For $P=(x_1,x_2, u(x_1, x_2))\in \Sigma,\,\, (x_1, x_2)\in \mathcal{S}^{\e}$,
 \be \label{eq.66}
 H(P)\geq \theta(\e) ~.
 \ee
 \end{theorem}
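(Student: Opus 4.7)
The strategy is to analyze subsequential limits of the vertical translates $u^A(x_1,x_2) := u(x_1, x_2+A) - u(0, A)$ as $A\to \pm\infty$, show that each such limit must be the grim cylinder $u^\lambda$, and extract (i)--(v) from this identification together with the convexity of $u$.

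First, I would extract a smooth limit of translated surfaces. By Lemma \ref{pr2.cor}, pick $P_n = (x_1^n, A_n, u(x_1^n, A_n))\in \Sigma$ with $A_n\to +\infty$ and $H(P_n)\geq \theta>0$; Lemma \ref{grad} gives $|x_1^n|\leq R-\theta$. Let $\Sigma_n := \Sigma - P_n$. By Theorem \ref{curvature} the second fundamental form is uniformly bounded, and by Lemma \ref{graph} the surfaces $\Sigma_n$ are uniformly locally graphical; along a subsequence they converge smoothly to a complete convex translating soliton $\Sigma_\infty$ with $H(0)\geq\theta>0$, so $\Sigma_\infty$ is not a vertical plane. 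Since the convex body above $\Sigma$ is preserved under translation and $(P_n-P_0)/|P_n-P_0|\to\omega$ with $\omega_2>0$, the argument at the end of the proof of Lemma \ref{entire} shows that $\Sigma_\infty$ contains the line through the origin in direction $\omega$; Lemma \ref{grim} then forces $\Sigma_\infty$ to be a grim cylinder. Writing it in the form $\mu^2\log\sec((y_1-y_0)/\mu)+Ly_2$ on its maximal strip of width $\mu\pi$ with $L^2=\mu^2-1$, and translating back, a subsequence of $u^{A_n}$ converges on compact subsets of $\mathcal{S}^\lambda$ to $u^*(x_1,x_2) = \mu^2\log\sec((x_1-a)/\mu) + Lx_2$. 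Since each $u^{A_n}$ is defined on $\mathcal{S}^\lambda$ and blows up at $|x_1|=R$ (by convexity and completeness), so does $u^*$, forcing the grim cylinder's natural domain to equal $\mathcal{S}^\lambda$; hence $a=0$, $\mu=\lambda$, $u^*=u^\lambda$, and $\lambda\geq 1$.

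This disposes of $\lambda<1$ in (i). For $\lambda=1$, $L=0$ so $u^*_{x_2}\equiv 0$; the analogous argument at $A_n\to-\infty$ produces the same limit $\log\sec x_1$, and by monotonicity of $u_{x_2}(x_1,\cdot)$ (convexity) the agreement of the limits at $\pm\infty$ forces $u_{x_2}\equiv 0$, making $u$ itself the standard grim cylinder, contradicting local strict convexity; this completes (i). For $\lambda>1$, set $L := \sqrt{\lambda^2-1}>0$. The monotone limit $M^+(x_1) := \lim_{x_2\to+\infty}u_{x_2}(x_1,x_2)$ exists, and by the uniqueness of the grim cylinder limit modulo sign of $L$, is constant equal to $\pm L$; after relabeling $e_2\mapsto-e_2$ if necessary we may take $M^+\equiv+L$, giving (ii). The symmetric limit $M^-$ at $-\infty$ is also $\pm L$; if $M^-=+L$ then monotonicity gives $u_{x_2}\equiv L$ and $u$ is a grim cylinder, again contradicting strict convexity, so $M^-\equiv-L$, which is (iii). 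Uniqueness of the subsequential limit then promotes convergence to full convergence as $A\to\pm\infty$, proving the first limit in (iv); the derivative statement follows from smooth convergence on compact subsets.

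Uniformity on $\mathcal{S}^\e$ in (iv) comes via Lemma \ref{maxprinc}: having established (ii)--(iii), $u_{x_2}(0,x_2)\in[-L,L]$ and $u_{x_1}(0,x_2)\to 0$ at $\pm\infty$ (by smooth convergence at a fixed point), so $\sup_{x_2} W(0,x_2)<\infty$; Lemma \ref{maxprinc} then bounds $W$ uniformly on each half-strip and hence on $\mathcal{S}^\e$, upgrading (iv) to uniform convergence. Part (v) follows: on $\mathcal{S}^\e\cap\{|x_2|\geq M\}$ for large $M$, uniform convergence gives $H\geq \tfrac12\inf_{\mathcal{S}^\e}1/W^\lambda>0$; on the compact complement $H>0$ continuously gives a positive lower bound; together $H\geq\theta(\e)>0$. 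The main obstacle is the step pinning down $\mu=\lambda$ and $a=0$ in the limit identification: the limit $u^*$ must inherit completeness at the strip boundary from each $u^{A_n}$, which requires a careful interplay between the smooth convergence on compact subsets of $\mathcal{S}^\lambda$ and the boundary blow-up of each translate. The rest of the argument is a fairly standard bootstrapping from this limit identification.
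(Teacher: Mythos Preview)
Your overall strategy---translate vertically, extract a grim cylinder limit, and read off the asymptotics---is exactly what the paper does. The gap you flag, namely showing that the limit grim cylinder has the full width $\mu=\lambda$ rather than a smaller $\mu$, is indeed the crux of the proof, and your proposed justification (``each $u^{A_n}$ blows up at $|x_1|=R$, so $u^*$ does too'') does not work as stated: smooth convergence is only available on compact subsets where you already have gradient bounds, and a priori those bounds are only known inside the narrower strip $|x_1|<R'$ where the limit grim cylinder lives. Nothing prevents $W(x_1,x_2+A_n)$ from blowing up at some $|x_1|$ strictly less than $R$ as $n\to\infty$.

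The paper resolves this by deploying Lemma \ref{maxprinc} \emph{earlier} than you do. First one works on the possibly-narrower strip: from the subsequential grim limit one extracts $\lim_{x_2\to\infty}u_{x_2}(x_1,x_2)=L'$ for $|x_1|<R'$ and, by choosing the special sequence $x_1^n=x_1(x_2^n)$ (the minimum of each cross-section), one gets $\lim_{x_2\to\infty}u_{x_1}(0,x_2)=0$, hence $C_1:=\sup_{x_2\ge 0}W(0,x_2)<\infty$. Now suppose for contradiction that $R'=R-2\delta<R$. Lemma \ref{maxprinc} then bounds $W$ uniformly on the half-strip $|x_1|\le R-\tfrac32\delta$, $x_2\ge 0$. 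But $R-\tfrac32\delta>R'$, so the translates $u^{A_n}$ have uniformly bounded gradient past the boundary of the limit grim cylinder, forcing the limit to extend smoothly there---contradicting the completeness (blow-up at $|x_1|=R'$) of $\Sigma_\infty$. So the role of Lemma \ref{maxprinc} in the paper is not merely to upgrade to uniform convergence after the fact, but to rule out width drop in the first place; once $\lambda'=\lambda$ is secured, parts (i)--(iv) follow essentially as you outline. For part (v) your compactness-plus-uniform-convergence argument is a valid and arguably simpler alternative to the paper's route via the Harnack inequality of Lemma \ref{harnack}.
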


\begin{proof}

By Lemma \ref{pr2.cor}  there is a sequence of points $P_n \in \Sigma$ where (after relabeling axes if necessary) $\liminf_{n\goto \infty}H(P_n)\geq \theta>0$ and $x_2^n \goto +\infty$. Note that by Lemma \ref{grad},
$ |x_1^n| \leq R-\theta$. Arguing as in Lemma \ref{entire},
$\Sigma_n:=\Sigma-P_n$ converge smoothly to $\Sigma_\infty$, a complete convex translating soliton defined over a strip $\mathcal{S}^{\lambda^{\prime}}-(x_1^{\infty},0)$ passing through the origin.  Note that to begin with we can only assert that $\lambda^{\prime}\leq \lambda$.
By Lemma \ref{grim}, $\lambda^{\prime}>1$ and $u(y_1+x_1^n, y_2+x_2^n)-u(x_1^n, x_2^n)$ converges locally smoothly to
\[ (\lambda^{\prime})^2\log{\sec{(\frac{ y_1+x_1^{\infty}}{\lambda^{\prime}})}}+\sqrt{(\lambda^{\prime})^2-1}\,y_2-
(\lambda^{\prime})^2 \log{\sec{\frac{x_1^{\infty}}{\lambda^{\prime}}}}\]
 as $n\goto \infty$ for $|y_1+x_1^{\infty}|<R':=\frac{\pi}2 \lambda'$. In particular,
 \be \label{eq76}
 \begin{aligned}
 \ &  i. \lim_{n\goto \infty}u_{x_1}(y_1+x_1^n, y_2+x_2^n) = \lambda' \tan{\frac{y_1+x_1^{\infty}}{\lambda'}}~,\\
 \ &  ii. \lim_{n\goto \infty}u_{x_2}(y_1+x_1^n, y_2+x_2^n) = L':=\sqrt{(\lambda')^2-1}~.
 \end{aligned}
 \ee

 Using Lemma \ref{arctan} we see that
 \be \label{eq76.5}
 \begin{aligned}
 & |\arctan{u_{x_1}(y_1+o(1)+x_1^{\infty},y_2+x_2^n)}-\arctan{u_{x_1}(y_1+x_1^n,y_2+x_2^n)}|\\
 &\leq |x_1^n-x_1^{\infty}+o(1)| \goto 0 \,\,\text{as $n\goto \infty$},\,\,\,\text{hence by \eqref{eq76} i.}\\
 & \lim_{n\goto \infty}u_{x_1}(y_1+o(1)+x_1^{\infty}, y_2+x_2^n) = \lambda' \tan{\frac{y_1+x_1^{\infty}}{\lambda'}}.
 \end{aligned}
 \ee

Similarly using Lemma \ref{arctan} and \eqref{eq76.5},
 \be \label{eq76.7}
 \begin{aligned}
 \ & |\arctan{u_{x_2}(y_1+x_1^{\infty},y_2+x_2^n)}-\arctan{u_{x_2}(y_1+x_1^n,y_2+x_2^n)}|\\
\ &  \leq \frac{\sqrt{1+u_{x_1}^2}}{\sqrt{1+u_{x_2}^2}}(y_1+o(1)+x_1^{\infty},y_2+x_2^n)|x_1^n-x_1^{\infty}|
\goto 0 \,\,\text{as $n\goto \infty$}~,\\
\ & \text{(so that by \eqref{eq76} ii.,) \ $\lim_{n\goto \infty}u_{x_2}(y_1+x_1^n, y_2+x_2^n) = L':=\sqrt{(\lambda')^2-1}$}~.
\end{aligned}
\ee
Setting $y_1=x_1-x_1^{\infty},\,y_2=x_2$, it follows that
\be \label{eq77}
\begin{aligned}
\ & i. \lim_{n\goto \infty}(u(x_1, x_2+x_2^n)-u(x_1^{\infty}, x_2^n))= u^{\lambda'}(x_1, x_2)-(\lambda^{\prime})^2 \log{\sec{\frac{x_1^{\infty}}{\lambda^{\prime}}}}\\
\ & ii. \lim_{n\goto \infty}u_{x_1}(x_1,x_2+x_2^n)=\lambda' \tan{\frac{x_1}{\lambda'}}\\
\ & iii. \lim_{n \goto \infty} u_{x_2} (x_1, x_2+x_2^n)=L'
\end{aligned}
\ee
for $|x_1|<\lambda'$.

From \eqref{eq77} iii. we conclude that
  $ \limsup_{x_2\goto \infty}u_{x_2}(x_1, x_2)\geq L'$. But
 $u_{x_2}(x_1, x_2)\leq u_{x_2}(x_1, x_2+x_2^n)$ for n large so
 \be \label{eq79}
 \lim_{x_2 \goto \infty}u_{x_2}(x_1, x_2)=L' \leq L,\, |x_1|<\lambda'~.
\ee
In exactly the same way, we may prove
\be \label{eq80}
 \lim_{x_2\goto -\infty}u_{x_2}(x_1, x_2)=L''\leq L,\, |x_1|<\lambda''~.
\ee

Note that by \eqref{eq79},\,\eqref{eq80} for $x_2=A,\, x_1=x_1(A)$ (recall $u_{x_1}(x_1(A),A)=0$), \\
$H((x_1(A),A, u(x_1(A),A))\geq \frac1{\sqrt{1+L^2}}$. Hence we may choose $x_2^n \goto \infty$ arbitrary and $x_1^n=x_1(x_2^n)$. From \eqref{eq76} i. with $y_1=y_2=0$, we conclude
  $x_1^{\infty}=0$. Since the choice of $x_2^n \goto \infty$ is arbitrary, it follows from \eqref{eq77} ii.
  (with $x_1=x_2=0$) that

  \be \label{eq81}
  \lim_{x_2\goto \infty} u_{x_1}(0, x_2)=0~,
  \ee
and therefore (by \eqref{eq79})

\be \label{eq82}
\lim_{x_2\goto \infty} W(0,x_2)=\lambda'~,
\ee
where $W:=\sqrt{1+|Du|^2}$.\\

We can now prove that there is no drop of width in the strip of convergence, i.e $\lambda'= \lambda$ (and similarly for $\lambda''=\lambda$). Suppose for contradiction that $R'=R-2\delta<R$ and
set $C_0:=\sup_{|x_1|<R-\delta}W(x_1,0)$. By \eqref{eq82}, $C_1:=\sup_{x_2\geq 0}W(0,x_2)<\infty$.
Applying Lemma \ref{maxprinc}, we conclude
$W(x_1,x_2)\leq  2\frac{R-\delta}{\delta}\max{(C_0,C_1)}$ in the half-strip $\{(x_1,x_2): |x_1| \leq R-\frac32 \delta,\,x_2 \geq 0\}$, contradicting the completeness of $\Sigma_{\infty}$. This completes the proof of
parts i.ii., iii. and part iv. follows immediately from \eqref{eq77} i., ii. since $x_1^n=x_1(x_2^n),\, x_1^{\infty}=0$
and $x_2^n$ is arbitrary.

 To prove part v. we will use Lemma \ref{harnack} with
 \[P_2=(x_1,x_2, u(x_1,x_2)),\, P_1=(x_1(x_2), x_2, u(x_1(x_2),x_2)),\,H(P_1)\geq \frac1{\sqrt{1+L^2}}~.\]
 Normalize $u$ by $|\nabla u(x_1(0), 0)|=0$.
 We observe that
 \[d^{\Sigma}(P_2, P_1) \leq L(x_2):=\int_{-R+\e}^{R-\e}\sqrt{1+u_{x_1}^2(t,x_2)}~dt\]
 and by \eqref{eq.65}, $L(x_2) \leq M(\e)$  for $|x_2| \geq A(\e)$ sufficiently large.  Therefore using Lemma \ref{arctan}, $|L^{\prime}(x_2)|\leq \sqrt{1+L^2}\,L(x_2)$ and so
\be \label{eq.74}
d^{\Sigma}(P_2, P_1)\leq L(x_2)\leq e^{\sqrt{1+L^2}(A(\e)-|x_2|)}M(\e) \leq  e^{\sqrt{1+L^2}A(\e)}M(\e)=:\ol{M}(\e)
\ee
for $|x_2| \leq A(\e)$.
Therefore $H(P_1) \geq \theta(\e):=\frac{e^{-\ol{M}(\e)}}{\sqrt{1+L^2}} $, completing the proof.
\end{proof}

An immediate application of Theorem \ref{thm.strip1} is the following symmetry result.
 \begin{theorem} \label{thm.strip2} Let $\Sigma=\text{graph}(u)$  be a complete locally strictly convex translating soliton defined over a strip region $\mathcal{S}^{\lambda}$. Then $u(x_1,x_2)=u(-x_1,x_2)$ and $u_{x_1}(x_1,x_2)>0$ for $x_1>0$.
 \end{theorem}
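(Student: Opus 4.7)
The plan is to run an Alexandrov moving-plane argument in the $x_1$-direction, using Theorem \ref{thm.strip1} iv.\ to supply the asymptotic symmetry that replaces ``decay at spatial infinity'' in the classical setting. For $c\in(0,R)$, I set $\Omega_c=\{(x_1,x_2):2c-R<x_1<c\}$ and
\[v_c(x_1,x_2):=u(2c-x_1,x_2)-u(x_1,x_2).\]
Both $u$ and the reflection $\tilde u_c(x_1,x_2):=u(2c-x_1,x_2)$ satisfy \eqref{0.26}, and since that equation does not involve $u$ itself, subtracting and integrating the derivatives of the quasilinear operator along the segment joining the two $2$-jets shows that $v_c$ satisfies a linear elliptic equation $a^{ij}\partial_{ij}v_c+b^i\partial_i v_c=0$ on $\Omega_c$ with no zeroth-order term, so the strong maximum principle is available.

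The core step is to show $v_c\ge 0$ on $\Omega_c$ for every $c\in(0,R)$. I will argue by contradiction: if $m:=\inf_{\Omega_c}v_c<0$, pick a minimizing sequence $(p_n,q_n)$ and, after passing to a subsequence, analyse its accumulation. An interior limit is forbidden by the strong maximum principle, since the resulting interior minimum would force $v_c\equiv m$, contradicting $v_c|_{x_1=c}=0$. A limit with $q_n$ bounded and $p_n\to c$ gives $v_c\to 0>m$ by continuity, while $p_n\to 2c-R$ with $q_n$ bounded forces $v_c\to+\infty$ since $u\to+\infty$ at the edge of the strip. In the case $|q_n|\to\infty$, the essential input is Theorem \ref{thm.strip1} iv., which yields, for any $\eta>0$,
\[v_c(x_1,q_n)=\lambda^2\log\frac{\sec((2c-x_1)/\lambda)}{\sec(x_1/\lambda)}+o(1)\]
as $|q_n|\to\infty$, uniformly in $x_1\in[2c-R+\eta,c]$ (the $\pm Lx_2$ pieces of $u^{\lambda}$ cancel in the difference). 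Since $|2c-x_1|\ge|x_1|$ on $[2c-R,c]$ for $c>0$, this limit is $\ge 0$, which rules out the subcase $p^{\infty}\in(2c-R,c]$. The delicate subcase $p_n\to 2c-R$ together with $|q_n|\to\infty$ will be closed by combining the uniform asymptotic $u_{x_1}(R-\eta,q_n)\to\lambda\cot(\eta/\lambda)$ from Theorem \ref{thm.strip1} iv.\ with convexity of $u$ in $x_1$ to obtain $u(2c-p_n,q_n)\ge u(R-\eta,q_n)$, forcing $v_c(p_n,q_n)$ to exceed any prescribed bound as $\eta\to 0$.

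With the main claim in hand, letting $c\searrow 0$ gives $u(-x_1,x_2)\ge u(x_1,x_2)$ on $-R<x_1<0$; the symmetric argument with $c\in(-R,0)$ produces the reverse inequality, and together they yield $u(x_1,x_2)=u(-x_1,x_2)$. Finally, since $\Sigma$ is locally strictly convex, $u_{x_1x_1}>0$, so $x_1\mapsto u_{x_1}(x_1,x_2)$ is strictly increasing for each fixed $x_2$; combined with $u_{x_1}(0,x_2)=0$ coming from the established symmetry, this gives $u_{x_1}(x_1,x_2)>0$ for $x_1>0$.

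The main obstacle is the noncompact corner behaviour when $p_n\to 2c-R$ and $|q_n|\to\infty$ simultaneously: the uniform convergence in Theorem \ref{thm.strip1} iv.\ is only over sets $\{|x_1|\le R-\eta\}$ and degenerates at the vertical walls of the strip, so an auxiliary gradient lower bound from convexity is indispensable to close that case.
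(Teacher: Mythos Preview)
Your proof is correct and follows the same moving-plane strategy as the paper. The reflection you set up on $\Omega_c=(2c-R,c)$ is the mirror image of the paper's reflection on $\mathcal{S}_t=(t,R)$; the two comparison functions are related by the change of variables $x_1\mapsto 2t-x_1$, so the analytic content is identical. Two minor differences are worth noting. First, you treat the ``corner'' case $p_n\to 2c-R,\ |q_n|\to\infty$ explicitly, using the asymptotic for $u_{x_1}(R-\eta,\cdot)$ together with convexity to push $u(2c-p_n,q_n)$ above $u(R-\eta,q_n)$; the paper simply records the boundary behaviour $v^t\to+\infty$ as $x_1\to R$ and the limit as $x_2\to\pm\infty$ and invokes the maximum principle without isolating this case, so your version is in fact more careful here. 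Second, for the monotonicity conclusion you use $u_{x_1x_1}>0$ (immediate from local strict convexity of the graph) together with $u_{x_1}(0,x_2)=0$, whereas the paper differentiates the equation and applies the strong maximum principle to $u_{x_1}$; both routes are valid, and yours is the more direct one under the stated hypothesis.
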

 \begin{proof} We employ the method of moving planes; see for example \cite{BCN96}. Set $ \mathcal{S}_t=\{(x_1,x_2)\in \mathcal{S}^{\lambda}: t<x_1<R\}$ for $ t\in (0,R)$.
 We want to show that the function
 \be \label{eq.80}
 v^t(x_1,x_2):=u(x_1,x_2)-u(2t-x_1,x_2)>0 \,\,\text{in $\mathcal{S}_t$ for all $t\in (0,R)$}~.
 \ee
 Once \eqref{eq.80} is proven, the conclusion of Theorem \ref{thm.strip2} follows easily. Indeed letting $t\goto 0$
 in \eqref{eq.80} implies by continuity
 \be \label{eq.90}
 u(x_1,x_2)\geq u(-x_1,x_2)\,\, \text{for $0\leq x_1 \leq R$}~.
 \ee
 Since we may replace $x_1$ by $-x_1$ in \eqref{eq.90}, we have equality and thus we have symmetry in $x_1$. From \eqref{eq.80} we may also conclude that $u_{x_1}\geq 0$ for $0< x_1<R$. Since $u_{x_1}$ satisfies an elliptic equation
 without zeroth order term, we have from the maximum principle either $u_{x_1}>0$ or $u_{x_1}\equiv 0$. Since the latter possibility is impossible, Theorem \ref{thm.strip2} follows.\\
  Set $u^t=u(2t-x_1,x_2)$. Since  $ u$ and $u^t$ both satisfy the same elliptic equation \eqref{0.25} in $\mathcal{S}_t$, the difference $v^t=u-u^t$ satisfies a linear elliptic equation
  \be \label{eq.100} \sum_{i,j=1}^2 A^{ij}v^t_{ij}+\sum_{i=1}^2 B^k v^t_k=0 \,\, \text{in $\mathcal{S}_t$}~,
 \ee
 and so $v^t$ cannot have an interior minimum in $\mathcal{S}_t$. Also $v^t=0$ for $x_1=t,\,\,\lim_{x_1\goto R}v^t=+\infty$, and by Theorem \ref{thm.strip1} part iv.,
 \be \label{eq.110}
 \lim_{x_2 \goto \pm \infty} v^t =\lambda^2 \log{\{\frac{\sec{\frac{x_1}{\lambda}}}{\sec{\frac{(2t-x_1)}{\lambda}}}\}}\geq 0.
 \ee
Therefore $v_t\geq 0$ and so by the maximum principle \eqref{eq.80} holds completing the proof.
 \end{proof}

\bigskip

\bigskip

\end{document}